\documentclass[a4paper,12pt,wlayout]{article}

\usepackage{array} 
\usepackage{amssymb}
\usepackage{amsmath} 
\usepackage{amsbsy} 
\usepackage{epsfig}
\usepackage{mathrsfs} 
\usepackage{graphics}
\usepackage{graphicx}
\usepackage[sort&compress,numbers]{natbib}
\usepackage{pstricks}
\usepackage{float}
\usepackage{amsthm}
\usepackage{slashed}
\usepackage{a4wide}

% Parameters

\newcommand{\eps}{\varepsilon}
\newcommand{\lb}{\label}
\newcommand{\go}{\rightarrow}

% Derivatives 

% Equations
\newcommand{\ee}{\end{equation}}
\newcommand{\be}{\begin{equation}}
\newcommand{\bea}{\begin{eqnarray}}
\newcommand{\eea}{\end{eqnarray}}
\newcommand{\sbea}{\begin{subequations}\begin{eqnarray}}
\newcommand{\seea}{\end{eqnarray}\end{subequations}} 
\newcommand{\ees}{\end{equation*}}
\newcommand{\bes}{\begin{equation*}}
\newcommand{\beas}{\begin{eqnarray*}}
\newcommand{\eeas}{\end{eqnarray*}}

%Misc
\newcommand{\rf}[1]{(\ref{#1})}

\newcommand{\const}{\mathrm{const}}
\newcommand{\mR}{\mathbb{R}\,}

\theoremstyle{plain}
\newtheorem{theorem}{\bf Theorem}[section]
\newtheorem{lemma}[theorem]{\bf Lemma}

\theoremstyle{remark}
\newtheorem{remark}[theorem]{\bf Remark}

%%%%%%%%%%%%%%%%%%%%%%%%%%%%%%%%%%%%%%%%%%%%%%%%%%%%%%%%%%%%%%%%%%%%%%%%%%%%%%
\begin{document}
\title{Asymptotic decay and non-rupture of viscous sheets}

\author{M. Fontelos$^1$, G. Kitavtsev$^2$ \corresp{\email{georgy.kitavtsev@bristol.ac.uk}} and R. Taranets$^3$}

\author{Marco Fontelos\thanks{Instituto de Ciencias Matem\'aticas,
 (ICMAT, CSIC-UAM-UCM-UC3M), C/ Serrano 123, 28006 
Madrid, Spain.}, Georgy Kitavtsev\thanks{School of Mathematics, University of Bristol, University Walk,
Bristol, BS81TW, UK} \hspace{-1ex} and Roman M. Taranets\thanks{Institute of Applied Mathematics and Mechanics of the NASU, Dobrovolskogo Str. 1, 84100, Sloviansk, Ukraine}}

%\date{15.11.2017}
\maketitle

\begin{abstract}
For a nonlinear system of coupled PDEs, that describes evolution of a viscous thin liquid sheet and takes account of surface tension at the free surface, we show exponential $(H^1,\,L^2)$ asymptotic decay to the flat profile of its solutions considered with general initial data.
Additionally, by transforming the system to Lagrangian coordinates we show
that the minimal thickness of the sheet stays positive for all times. This result
proves the conjecture formally accepted in the physical literature~\cite{EF15}, that a
viscous sheet can not rupture in finite time in absence of external forcing. Moreover, in absence of surface tension we find a special
class of initial data for which the Lagrangian solution exhibits $L^2$-exponential decay to the flat profile.
\end{abstract}

%%%%%%%%%%%%%%%%%%%%%%%%%%%%%%%%%%%%%%%%%%%%%%%%%%%%%%%%%%%%%%%%%%%%%%%%%%%%%
\section{Introduction}\label{A}
%%%%%%%%%%%%%%%%%%%%%%%%%%%%%%%%%%%%%%%%%%%%%%%%%%%%%%%%%%%%%%%%%%%%%%%%%%%%%
The last decades showed a considerable progress in mathematical understanding of
singularity formation during dewetting of thin viscous liquid films, see e.g. reviews~\cite{ODB97,EV08,CM09}.
In particular, topics of finite time rupture of thin films and pinch-off of a liquid thread
driven by attractive intermolecular van der Waals forces and curvature, respectively, were intensively studied
both analytically and numerically starting from the pioneering articles~\cite{WD82,ED93,E93}.
Fundamental forces in these processes include van der Waals, surface tension, viscous and inertia ones~\cite{kad94,mik96,lis99,WB99,WB00,vay01,VLW01}.
Let us also mention recent articles~\cite{BT13,KFE17} of the infinite time rupture in viscous sheets
driven by Marangoni forces in presence of gradients of temperature or surfactant distributions at the sheet free surface.

In the modelling of dewetting processes in micro- and nanoscopic liquid films lubrication approximation resulting in high-order degenerate
parabolic equations occurs to be especially effective, see e.g.~\cite{ODB97,MWW06} and references therein.
In this article, we study asymptotic  behaviour of solutions to the system describing evolution of a thin viscous sheet~\cite{EF15}:
\begin{subequations}
\label{SSM}
\begin{align}
v_t+vv_x&=\sigma\,h_{xxx}+\frac{\nu}{h}[hv_x]_x,
\label{SSM1}\\
h_t&= -\left(h v\right)_x.
\label{SSM2}
\end{align}
\end{subequations}
Here, $v(x,t)$ and $h(x,t)$ denote the average velocity in the lateral
direction and the height profile for the free surface, while $\sigma,\,\nu$
are dimensionless surface tension and viscosity, respectively.
The high order of system \rf{SSM} is a result of the contribution from surface tension at the free boundary, reflected by the linearised curvature term $\sigma h_{xx}$. 
The terms $v_t+vv_x$ and $\nu(hu_x)_x/h$ in \rf{SSM1} represent inertial and Trouton viscosity terms, respectively.

Existence of weak solutions to \rf{SSM} considered in bounded domains
$Q_T=(0,T)\times\Omega$, where $\Omega=(0,1)$, with zero velocity and homogeneous
Neumann boundary conditions for $h$:
\be
h_x(0,t)=h_x(1,t)=v(0,t)=v(1,t)=0
\lb{BC}
\ee
and initial data with positive height:
\bes
h(x,0)=h_0(x)>0,\  v(x,0)=v_0(x)\quad\text{for}\quad x\in[0,\,1],
\ees
was shown by~\citet{KLN11} via an additional introduction of the regularising Lennard-Jones potential in \rf{SSM1}.
Observe, that the boundary conditions \rf{BC} for $v$ guarantee the mass conservation
\be
\int_0^1 h(x,t)\,dx =||h_0||_1:=M>0\quad \text{for all}\  t\geqslant  0.
\lb{CM}
\ee

To summarize what is known or widely accepted about pinch-off singularities of system \eqref{SSM}.
Finite time rupture of solutions of \rf{SSM} under additional presence of van der Waals forces in \rf{SSM1}
was investigated recently both numerically and analytically in~\cite{VLW01,PMN10}.
They were concerned with existence and stability of the self-similar solutions dynamically describing the neighborhood of the pinch-off.  In~\cite{VLW01} it was shown that the rupture occurs due to combined
competition between inertia, viscosity and van der Waals terms, while
surface tension staying negligible. The  self-similar solutions in this case
exhibit the rupture in finite time with the minimum height evolving as
\bes
\min_{x\in\Omega} h(x,t) \sim (T^*-t)^{1/3},
\ees
where $T^*$ is the rupture time.
In turn, infinite time rupture of solutions to \rf{SSM} considered without van
der Waals forces but rather coupled through the Marangoni term with the
diffusion equation for surfactant distribution at the sheet free surface was described recently in~\cite{BT13,KFE17}. In~\cite{KFE17} it was shown that the minimum height in this case follows the self-similar law
\bes
\min_{x\in\Omega} h(x,t) \sim \exp\{-a\nu t\},
\ees
where $a>0$ is the thinning rate of the sheet depending both on the initial
data and physical parameters of the system.

Finally, for system \rf{SSM} in absence of any additional forces and in inviscid case, i.e. $\nu=0$, the finite time rupture can occur for suitable initial conditions~\cite{Ma76,PS98,BT07} .
The neighborhood of the pinch point is described then by the similarity solution of~\cite{BT07} with
\be
\min_{x\in\Omega} h(x,t) \sim (T^*-t)^{\beta},\quad \max_{x\in\Omega} v(x,t) \sim (T^*-t)^{-\gamma}\quad\text{with}\ \beta=0.7477,\ \gamma=0.3131,
\lb{BT_ss}
\ee
and the rupture is driven by the competition of inertia and curvature terms in \rf{SSM}.

In contrast to the mentioned studies, the main focus of this article is to consider system \rf{SSM} with $\nu>0$  and to prove that finite time rupture is not possible in this case, but rather the exponential asymptotic decay to the flat profile $(h,\,v)=(M,0)$, where $M$ is from \rf{CM}, occurs.
The fact, that viscous sheets can not rupture in absence of van der Waals or external forcing is formally well-accepted in physical literature (cf.~\cite{BT13, KFE17} and references therein). In fact, in the book~\cite{EF15} it was pointed out that system \rf{SSM} with $\sigma>0$ and $\nu>0$
does not admit consistent self-similar scalings. In particular, if one
considers the self-similar scaling \rf{BT_ss} in the case $\nu>0$,  one finds
out that the viscous Trouton term enters the leading order balance and becomes
dominant, by that  breaking the self-similar ansatz \rf{BT_ss}, when the minimum height decreases to {\it the critical range}:
\be
h_{min}(t)\gtrsim\nu^2,
\lb{LB}
\ee
Indeed, as it shown in Example 7.7 of~\cite{EF15} the surface tension and viscous terms
in \rf{SSM1} scale like $\sigma\tau^{\beta-2}$
and $\nu\tau^{-\beta-1}$, respectively, where $\tau^\beta$ is the self-similar spatial
scaling for the inviscid pinch-off analysed by~\cite{BT07}. Balancing these
two terms and using the fact $h_{min}\approx \tau^{4\beta-2}$ result together in \rf{LB}.

On the other hand, the threshold range \rf{LB} suggests that for small viscosity $\nu\ll 1$ solutions to \rf{SSM} may exhibit decrease of height at initial and intermediate times. Indeed, numerical solutions to problem \rf{SSM}, \rf{BC} indicate (see Fig. 1)  
that for initial data considered in~\cite{BT07}, that lead to the finite-time
rupture in the inviscid case, the minimum of the height tends to zero initially
until it enters the range given by \rf{LB} and only thereafter starts to converge to $M$. 
%%%%%%%%%%%%%%%%%%%%%%%%%%%%%%%%%%%%%%%%%%%%%%%%%%%%%%%%%
\begin{figure}
\centering
\includegraphics*[width=0.48\textwidth]{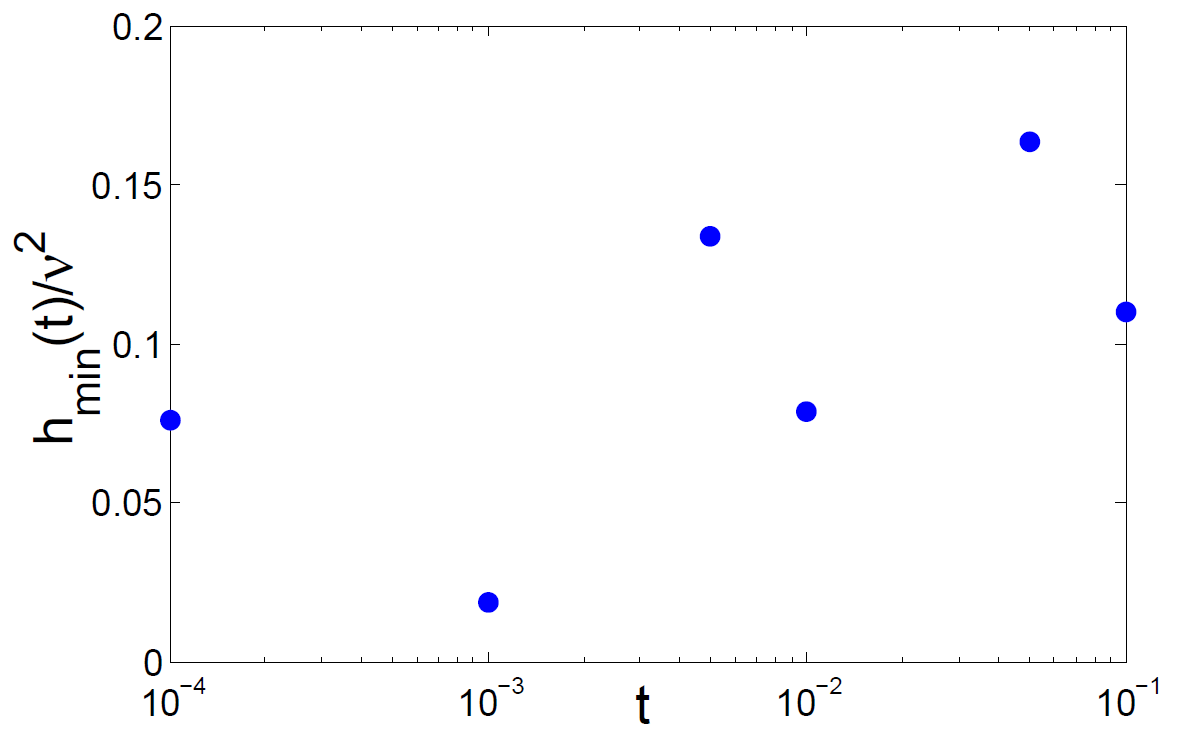}\hspace{0.5cm}
\includegraphics*[width=0.48\textwidth]{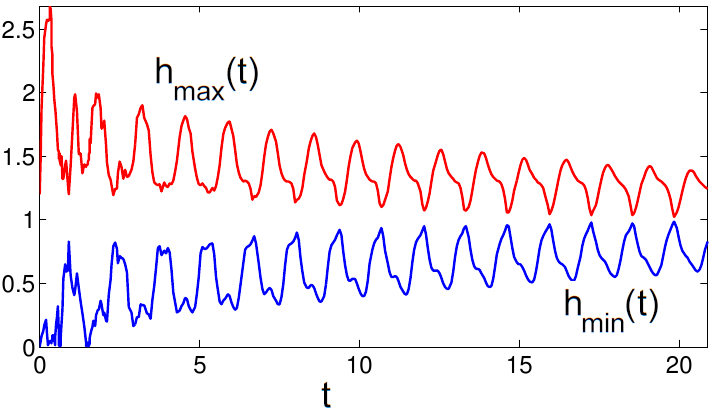}
\caption{Left: Values of minimum height divided by $\nu^2$ attained dynamically by numerical solutions to system \rf{SSM} with $\sigma=0$ and $\nu=0.1,\ 0.05,\ 0.01,\ 0.005,\ 0.001,\ 0.0001$. Initial data $h(x,0)=1-0.2\cos(\pi x/2),\ u(x,0)=\pi\sin(\pi x/2)$ was chosen as in the example of the inviscid pinch-off considered  in~\cite{BT07}.
Right: numerical evolution of the minimal sheet height in time for the same initial data and $\nu=0.1$. For obtaining numerical solutions to \rf{SSM} the numerical code developed in~\cite{KW10,KFE17} was used.}
\label{visc_threshold}
\end{figure}
%%%%%%%%%%%%%%%%%%%%%%%%%%%%%%%%%%%%%%%%%%%%%%%%%%%%%%%%%%

Nevertheless,  up to our knowledge, mathematical results showing non-rupture
or asymptotic decay of solutions to \rf{SSM} are still lacked
in the literature. The main goal of this article is to fill this gap and to show that \rf{LB} provides the sharp uniform in space and time lower bound for the solutions to \rf{SSM} considered with $\nu>0$ and $\sigma>0$
as well as the exponential asymptotic decay of the latter to the flat profile $(h,\,v)=(M,0)$.

We start in section 2 by showing the $(H^1,\,L^2)$ exponential asymptotic decay for the non-negative solutions to \rf{SSM} to $(h,\,v)=(M,0)$. More precisely, by using the dissipation of
entropy found in~\cite{KLN11}, in the case  $\nu>0$ and $\sigma>0$ we show that any sufficiently regular solution with initially non-negative height converges to $(M,0)$ at an exponential rate (Theorem 2.1). Nevertheless, this result still does not exclude a possibility of 
rupture in finite time that could lead to loss of regularity properties of the solutions and by that terminating the asymptotic decay at the time when
the height touches the zero. Therefore, in sections 3-5 we additionally show
that the solutions to \rf{SSM} can not touch zero in finite time.
For this purpose, transformation of \rf{SSM} to the Lagrangian coordinates
following ideas suggested in~\cite{ren01,Fo03} for the systems describing viscous jets turns out to be very usefull. In
section 4, we show that system \rf{SSM} considered without curvature term
(the case $\sigma=0$) transforms to a porous medium type equation~\cite{Va07} by the Lagrangian formalism. In this case, existence, uniqueness and asymptotic  behaviour of
the solutions to it are understood completely rigorous (Theorem 4.1). In
particular, the solutions can not either blow-up or rupture in finite time. Moreover, for the special class of initial data we are
able to show exponential asymptotic decay to the flat profile in the absence
of surface tension (see estimate \rf{fct}).

In section 5 we consider the Lagrangian formulation of the full system \rf{SSM} and provide an analytical argument that general solutions to it can not rupture in finite time, but rather obey the lower bound \rf{LB}.
Finally, in Appendix we extend results of section 1 to show the exponential
asymptotic  decay of the radially symmetric solutions (Theorem A.2)
to the two-dimensional generalisation of \rf{SSM}, which appear in
applications to the axisymmetric point rupture~\cite{VLW01}.  
For this purpose, we derive for the first time, up to our knowledge, 
the entropy and energy estimates for the radially symmetric solutions (Lemma A.1).

Finally, we comment on the regularity classes of solutions considered in this article.
Results of section 2 and Appendix are rigorous and apply to suitably defined $(H^1,L^2)$ weak solutions to systems \rf{SSM} and \rf{SSMr}, respectively.
Results of section 4 are rigorous and the strong solution obtained in Theorem 4.1 is positive and unique. In section 5
we present a formal analytical argument for positive classical solutions to equation \rf{Lagr_eq} considered with general initial data.

%%%%%%%%%%%%%%%%%%%%%%%%%%%%%%%%%%%%%%%%%%%%%%%%%%%%%%%%%%%%%%%%%%%%%%%%%%%%%%
\section{Asymptotic decay for non-negative solutions}
%%%%%%%%%%%%%%%%%%%%%%%%%%%%%%%%%%%%%%%%%%%%%%%%%%%%%%%%%%%%%%%%%%%%%%%%%%%%%%
In this section, we show that solutions to \rf{SSM}
having non-negative height for all times asymptotically decay in $(H^1,L^2)$-norm  to the flat profile $(h,\,u)=(M,\,0)$.

First of all, we define energy and entropy as
\beas
E(v,\,h)&:=&\tfrac{1}{2}\int_0^1\left\{hv^2+\sigma h_x^2\right\}\,dx,\\
S(v,h)&:=&\tfrac{1}{2}\int_0^1 \left\{h\left(v +(G(h))_x\right)^2+ \sigma h_x^2\right\}\, dx.
\eeas
Recall the following entropy and energy equalities~\cite{KLN11} which hold for the solutions of \rf{SSM}:
\bea
\lb{EnEs_o}
\tfrac{1}{2}\tfrac{d}{dt}\int_0^1 \left\{h \left(v  + (G(h))_x \right)^2
+ \sigma h_x^2 \right\}\, dx +\nu\sigma\int_0^1 h_{xx}^2 \, dx&=&0,\\
\tfrac{1}{2}\tfrac{d}{dt}\int_0^1\left\{hv^2+\sigma h_x^2\right\}\,dx+\nu\int_0^1 hv_x^2\,dx&=&0,
\lb{EEs_o}
\eea
where $G(h):=\nu\log(h)$. 
\begin{theorem}[asymptotic exponential decay]\label{B:Th1}
Assume that initial data $(h_0,\,v_0) \in H^1(0,1) \times L^2(0,1)$, $h_0\geqslant  0$, and
\bes
\sigma>0\ \text{and}\ \nu>0.
\ees
Then there exist positive constants $A$, $B$ depending on initial data and parameters $\sigma,\,\nu$ such that the the height $h(x,t)$ asymptotically exponentially decay to zero in $H^1$-norm:
\begin{equation}\label{B:2}
 \int \limits_0^1 { h_x^2 \, dx}  \leqslant A  e^{- B\,t}
\end{equation}
for all $t \geqslant 0$, hence
\begin{equation}\label{B:uc}
h \to M \text{ in } H^1(0,\,1)\cap C^{1/2}[0,\,1] \text{ as } t \to + \infty,
\end{equation}
where $M$ is defined in \rf{CM}. Moreover, for the velocity field we have 
\be\label{B:3}
\displaystyle v\go 0\quad\text{in}\ L^2(0,\,1)\quad\text{as}\quad t\go\infty.
\ee
\end{theorem}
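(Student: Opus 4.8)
The plan is to fuse the energy and entropy identities \rf{EEs_o}, \rf{EnEs_o} into a single Lyapunov functional, to show that it decays exponentially on every time interval on which the height stays above $M/2$, and then to produce such a lower bound for all large times by a compactness-plus-continuation argument. First I would record the basic consequences of \rf{EEs_o}--\rf{EnEs_o}: $E(v,h)$ is non-increasing, so $\tfrac\sigma2\|h_x(t)\|_{L^2}^2\le E_0:=E(v_0,h_0)$, and since $\|h(t)-M\|_{L^\infty}\le\|h_x(t)\|_{L^1}\le\|h_x(t)\|_{L^2}$ this gives $\|h(t)\|_{L^\infty}\le H:=M+\sqrt{2E_0/\sigma}$ for all $t$ by \rf{CM}; likewise $S(v,h)$ is non-increasing, and adding the time-integrated forms of the two identities yields $\int_0^\infty\bigl(\nu\sigma\|h_{xx}\|_{L^2}^2+\nu\int_0^1 hv_x^2\,dx\bigr)\,dt<\infty$.

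Next I would extract smallness along a sequence. Since the last integral converges there is $\tau_n\to\infty$ with $\|h_{xx}(\tau_n)\|_{L^2}\to0$ and $\int_0^1 h(\tau_n)v_x(\tau_n)^2\,dx\to0$. The Poincar\'e inequalities $\|h_x\|_{L^2}^2\le\pi^{-2}\|h_{xx}\|_{L^2}^2$ (legitimate because $h_x(0,t)=h_x(1,t)=0$ by \rf{BC}) and $\|v\|_{L^2}^2\le\pi^{-2}\|v_x\|_{L^2}^2$ then give $\|h_x(\tau_n)\|_{L^2}\to0$, hence $h(\tau_n,\cdot)\ge M/2$ for $n$ large, hence $\|v_x(\tau_n)\|_{L^2}\to0$ and $\|v(\tau_n)\|_{L^2}\to0$. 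Substituting back into the definitions of $E$ and $S$ --- for $S$ also using $\int_0^1 h_x^2/h\,dx\le(2/M)\|h_x\|_{L^2}^2$ when $h\ge M/2$ --- shows $E(v(\tau_n),h(\tau_n))\to0$ and $S(v(\tau_n),h(\tau_n))\to0$, and monotonicity in $t$ then upgrades this to $E(t)\to0$ and $S(t)\to0$ as $t\to\infty$.

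Then comes the decay itself. Set $\Phi:=S+E$, so $\tfrac{d}{dt}\Phi=-\nu\sigma\|h_{xx}\|_{L^2}^2-\nu\int_0^1 hv_x^2\,dx$ by \rf{EnEs_o}--\rf{EEs_o}. On any interval where $M/2\le h\le H$, the elementary bound $\tfrac12\int_0^1 hw^2\,dx\le\int_0^1 hv^2\,dx+\tfrac{2\nu^2}{M}\|h_x\|_{L^2}^2$ with $w:=v+(G(h))_x$, together with the weighted Poincar\'e estimates $\int_0^1 hv^2\,dx\le\tfrac{2H}{M\pi^2}\int_0^1 hv_x^2\,dx$ and $\|h_x\|_{L^2}^2\le\pi^{-2}\|h_{xx}\|_{L^2}^2$, combine to give $\Phi\le C_v\int_0^1 hv_x^2\,dx+C_h\|h_{xx}\|_{L^2}^2$ with $C_v,C_h$ depending only on $\sigma,\nu,M,H$; hence $\tfrac{d}{dt}\Phi\le-B\Phi$ with $B:=\min\{\nu\sigma/C_h,\ \nu/C_v\}>0$ there. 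Using the previous step, pick $T_0$ with $\Phi(T_0)<\sigma M^2/8$, so that $\|h_x(T_0)\|_{L^2}<M/2$ and $h(T_0,\cdot)>M/2$; let $T^*:=\sup\{T\ge T_0:\ h\ge M/2\ \text{on}\ [T_0,T]\}$. On $[T_0,T^*)$ the differential inequality yields $\Phi(t)\le\Phi(T_0)e^{-B(t-T_0)}$, whence $\|h_x(t)\|_{L^2}^2\le\tfrac2\sigma\Phi(t)<M^2/4$ and therefore $h(t,\cdot)>M/2$; by continuity this rules out $T^*<\infty$.

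It remains to read off the three assertions and to point at the real obstacle. For $t\ge T_0$ we now have $\|h_x(t)\|_{L^2}^2\le\tfrac2\sigma\Phi(T_0)e^{-B(t-T_0)}$, and enlarging the prefactor to cover the bounded interval $[0,T_0]$ (where $\|h_x\|_{L^2}^2\le2E_0/\sigma$) gives \rf{B:2}; then \rf{B:uc} follows from $\|h-M\|_{H^1}^2\le2\|h_x\|_{L^2}^2$ and the Morrey bound $\|h-M\|_{C^{1/2}[0,1]}\le 2\|h_x\|_{L^2}$, and \rf{B:3} from $\tfrac M4\|v(t)\|_{L^2}^2\le E(t)\to0$ for $t\ge T_0$. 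The hard point is that $h$ may a priori vanish, which is precisely where the weighted Poincar\'e inequalities used above degenerate; the argument escapes this only by first exploiting the finiteness of the \emph{total} dissipation to find one time sequence along which \emph{both} $h_{xx}$ and $hv_x^2$ are small --- pinning $h$ near $M$ and $v$ near $0$ there --- and then by the self-sustaining nature of the decay, whereby smallness of $\Phi$ keeps $h>M/2$, which in turn validates the very estimate that produces the decay. (Whether rupture, and with it the loss of the regularity underpinning \rf{EEs_o}--\rf{EnEs_o}, can actually happen is not settled here, but only in Sections~3--5.)
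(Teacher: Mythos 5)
Your argument is correct at the same level of rigour as the paper's (both take the identities \rf{EnEs_o}--\rf{EEs_o} and enough time-continuity for granted), but it follows a genuinely different route. The paper obtains \rf{B:2} in essentially three lines from the entropy identity alone: the Poincar\'e inequality $\|h_x\|_2^2\leqslant\pi^{-2}\|h_{xx}\|_2^2$ turns \rf{EnEs_o} into the integral inequality \rf{C:3}, and an ODE comparison then gives the explicit constants $A=\tfrac{2}{\sigma}S(v_0,h_0)$, $B=2\nu\pi^2$; crucially, no lower bound on $h$ (not even an eventual one) is used for \rf{B:2}--\rf{B:uc}, which is precisely what allows the theorem to be stated for merely non-negative heights, and positivity of $h$ for large times (a consequence of \rf{B:uc}) enters only in the proof of \rf{B:3}, which the paper gets by proving exponential decay of $\|h^{3/2}v\|_{L^2}$ through the interpolation bound \rf{Ms4} and the summed energy--entropy inequality. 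You instead build the Lyapunov functional $\Phi=E+S$, show $\Phi\to0$ softly (integrability of the total dissipation, a sequence $\tau_n$, then monotonicity of $E$ and $S$), and close the genuine differential inequality $\tfrac{d}{dt}\Phi\leqslant-B\Phi$ on the region $M/2\leqslant h\leqslant H$ with a continuation argument. What this buys: a Gronwall argument applied to the monotone quantity $\Phi$, which sidesteps the paper's terser step of reading pointwise exponential decay of the non-monotone quantity $\|h_x\|_2^2$ directly off the integral inequality \rf{C:3}; it also yields exponential decay of the full energy, hence of $\|v\|_{L^2}$ for $t\geqslant T_0$ (slightly stronger than \rf{B:3}), without needing \rf{Ms4}. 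The price: your decay rate degrades with $H$ (i.e.\ with the size of the data), your prefactor involves the entry time $T_0$, which comes from a soft compactness step and is not quantified, so your constants are solution-dependent rather than explicit, and your proof of the $h$-decay itself relies on the eventual bound $h\geqslant M/2$, which the paper's proof of \rf{B:2} never invokes. Both arguments share the implicit requirement $S(v_0,h_0)<\infty$, a genuine restriction when $h_0$ touches zero.
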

\begin{proof}[Proof of Theorem~\ref{B:Th1}]
Using the Poincar\'{e}  inequality
\be
\int_0^1 h_x^2 \, dx\leqslant \tfrac{1}{\pi^2}\int_0^1 h_{xx}^2  \, dx,\quad h_x(0)=h_x(1) = 0,
\lb{PEq}
\ee
from the entropy inequality \rf{EnEs_o} we find that
\begin{equation}\label{C:3}
\tfrac{\sigma}{2} \int_0^1 h_x^2 \, dx +\nu\sigma\pi^2\int_0^t\int_0^1 h_{x}^2\, dx dt  \leqslant S(v_0,h_0).
\end{equation}
By (\ref{C:3}) the  $\|h_x\|_2^2$ is dominated by the solution of
$$
y'(t) = - B\, y(t), \ \ y(0) = A\quad\text{with}\quad A =\tfrac{2}{\sigma}S(v_0,h_0),\ B = 2\nu\pi^2.
$$
Solving for $y(t)$, we deduce that
$$
||h_x||_2^2 \leqslant y(t) = A\,e^{- B\,t} \mbox{ for all } t \geqslant 0,
$$
whence (\ref{B:2}) follows. Using the Poincar\'{e}  inequality and (\ref{B:2}), we
deduce
$$
\int_0^1 (h - M)^2 \, dx \leqslant \tfrac{1}{\pi^2}
\int_0^1 {h_{x}^2  \, dx} \leqslant \tfrac{A}{\pi^2}e^{- B\,t}.
$$
From here we have
$$
|| h - M||^2_{H^1(\Omega)} \leqslant A \left( 1 + \tfrac{1}{\pi^2}\right) e^{- B\,t},
$$
whence (\ref{B:uc}) follows.
To show \rf{B:3}, first \rf{EEs_o} and the compact embedding $H^1(0,\,1)\hookrightarrow L^\infty(0,\,1)$ imply
\bes
h(x,t)\leqslant C\quad\text{for all}\ (x,\,t)\in Q_T,
\ees
where here and below $C$ denotes a generic constant depending only on
$\sigma,\,\nu$ and $h_0,\,v_0$, and one estimates
\beas
&&||(h^{3/2}v)_x||_{L^1}\leqslant ||h^{3/2}v_x||_{L^2}+2||h_x||_{L^2}||\sqrt{h}v||_{L^2}\\
&&\leqslant C\left(||\sqrt{h}v_x||_{L^2}+||h_{xx}||_{L^2}\right),
\eeas
where the last inequality follows by \rf{EnEs_o} and \rf{PEq}. 
Next, using $L^1$  Poincar\'{e}  inequality and continuous embedding $W^{1,1}(0,\,1)\hookrightarrow L^p(0,\,1)$  one gets
\be
||h^{3/2}v||_{L^p}\leqslant C_p\left(||\sqrt{h}v_x||_{L^2}+||h_{xx}||_{L^2}\right)\quad\text{for}\ p\in [1,\,\infty].
\lb{Ms4}
\ee
On the other hand, we have
\bes
C\int_0^1hv^2\,dx\geqslant  \int_0^1h^3v^2\,dx.
\ees
Using this, summing up \rf{EnEs_o} and \rf{EEs_o}, and taking into account \rf{Ms4} one arrives at
\bes
\int_0^1h^3v^2\,dx+B_1\int_0^T\int_0^1h^3v^2\,dxdt\leqslant A_1,
\ees
with $A_1$ and $B_1$ depending on $\nu,\,\sigma,\,S(v_0,h_0)$ and $E(v_0,h_0)$. 
The comparison argument then implies
\be
||h^{3/2}v||_{L^2(0,\,1)}^2\leqslant A_1\,e^{- B_1\,t}. 
\lb{u_p_decay}
\ee
The last estimate together with the fact that $h>0$ for all $t>T^*$ for some
$T^*>0$ because of \rf{B:uc} implies \rf{B:3}.
\end{proof}

%%%%%%%%%%%%%%%%%%%%%%%%%%%%%%%%%%%%%%%%%%%%%%%%%%%%%%%%%%%%%%%%%%%%%%%%%%%%%%
\section{Transformation to Lagrangian coordinates}
%%%%%%%%%%%%%%%%%%%%%%%%%%%%%%%%%%%%%%%%%%%%%%%%%%%%%%%%%%%%%%%%%%%%%%%%%%%%%%
Using the mass conservation \rf{CM}, we make the change of coordinates as
in~\cite[section 4.1]{PMN10}
\be
x_s(s,t)=\frac{K}{h(x(s,t),t)},\quad x(s,0)=K\int_0^s\frac{ds'}{h_0(s')},\quad x_t(s,t)=v(x(s,t),t),
\lb{x_s}
\ee
where
\bes
K=1\Big/\int_0^1\frac{1}{h_0(s)}\,ds>0.
\ees
Moreover, due to the mass conservation \rf{CM} and the second relation in
\rf{x_s} one has necessarily
\bes
K=M.
\ees
Note, that \rf{x_s} defines a monotonic mapping of $s\in[0,\,1]$ into
$x(\cdot,\,t)\in[0,\,1]$ for each $t\geqslant  0$ provided $h(x,t)>0$. 
Then equation \rf{SSM2} is trivially satisfied and equation \rf{SSM1} transforms into
\bes
x_{tt}=\frac{\sigma\,M}{x_s}\left(\frac{1}{x_s}\left(\frac{1}{x_s}\left(\frac{1}{x_s}\right)_s\right)_s\right)_s+\nu\left(\frac{x_{ts}}{x_s^2}\right)_s
\ees
Next, by  differentiating the last equation on $s$ and introducing the new function $u:=x_s$, one arrives at the equation
\be
u_{tt}=\sigma\,M\left(\frac{1}{u}\left(\frac{1}{u}\left(\frac{1}{u}\left(\frac{1}{u}\right)_s\right)_s\right)_s\right)_s-\nu\left(\frac{1}{u}\right)_{tss}.
\lb{Lagr_eq}
\ee
Note, that due to \rf{x_s} the relation between new function $u$ and the solutions of \rf{SSM} is given by
\be
u(s,t)=\frac{M}{h(x(s,t),t)}.
\lb{h_u_rel}
\ee
The boundary conditions $x(0,t)=0$ and $x(1,t)=1$, which follow from the last
relation in \rf{x_s}, imply the conservation of Lagrange function:
\be
\int_0^1u(s,t)\,ds=1.
\lb{cons_mass}
\ee
Moreover, the boundary conditions for height profile in \rf{BC},
\bes
h_x(0,t)=h_x(1,t)=0,
\ees
due to \rf{x_s} and \rf{h_u_rel}, imply the Neumann boundary conditions for $u(s,t)$:
\be
u_s(0,t)=u_s(1,t)=0.
\lb{u_BC}
\ee
Finally, using \rf{cons_mass} and \rf{u_BC} one can rewrite the
entropy and energy estimates \rf{EnEs_o}-\rf{EEs_o} in the Lagrange setting for \rf{Lagr_eq}.
Namely, the following relations hold for solutions to \rf{Lagr_eq} for all $t>0$:
\bea
\lb{EnEs}
\hspace{-0.5cm}\tfrac{1}{2}\tfrac{d}{dt}\int_0^1\left[\left(\int_0^su_t\,ds'+\nu\left(\frac{1}{u}\right)_s\right)^2+\sigma\,M^2\frac{u_s^2}{u^5}\right]\,ds&=&-\nu\sigma\,M^2\int_0^1\left(\frac{3u_s^2}{u^5}-\frac{u_{ss}}{u^4}\right)^2u\,ds,\\
\tfrac{1}{2}\tfrac{d}{dt}\int_0^1\left[\left(\int_0^su_t\,ds'\right)^2+\sigma\,M^2\frac{u_s^2}{u^5}\right]\,ds&=&-\nu\int_0^1\left|\frac{u_t}{u}\right|^2\,ds.
\lb{EEs}
\eea
Although, a priori estimates \rf{EnEs}--\rf{EEs} can be derived for problem \rf{Lagr_eq}, \rf{u_BC} independently, the equivalence of Euler and Lagrange formulations given by transformation \rf{x_s} 
for classical solutions with positive for all times height  implies them directly. We note also that this equivalence valid also for local solutions with positive $h(x,t)$ for all $t\in (0,\,t_r)$, where $t_r>0$ is the first time at which $h(x,t)$ touches zero. In particular, this observation is
important for application of \rf{EnEs}--\rf{EEs} in the vicinity of potential rupture in section 5.

%%%%%%%%%%%%%%%%%%%%%%%%%%%%%%%%%%%%%%%%%%%%%%%%%%%%%%%%%%%%%%%%%%%%%%%%%%%%%%
\section{Boundedness and asymptotic decay in the case $\sigma=0$}
%%%%%%%%%%%%%%%%%%%%%%%%%%%%%%%%%%%%%%%%%%%%%%%%%%%%%%%%%%%%%%%%%%%%%%%%%%%%%%

In absence of surface tension, equation \rf{Lagr_eq} after integration in time reduces to 
\be
u_t+\nu\left(\frac{1}{u}\right)_{ss}=f(s)\quad\text{or}\quad u_t=\nu\left(u^{-2}u_s\right)_s+f(s),
\lb{Lagr_eq_zcurv}
\ee
where the integration factor is determined by the initial conditions and \rf{x_s} as
\be
f(s)=\frac{M}{h_0(x(s,0))}\left[v_0(x(s,0))+\nu\frac{h_{0,x}(x(s,0))}{h_0(x(s,0))}\right]_x.
\lb{f_def}
\ee
For compatibility with boundary conditions \rf{u_BC} and conservation of
Lagrange function \rf{cons_mass} one has to impose the following integral condition onto initial data:
\be
\int_0^1f(s)\,ds=0,
\lb{zero_f}
\ee
which is immediately true due to \rf{f_def} and \rf{x_s} if the initial data
is compatible with \rf{BC}, i.e.
\bes
h_{0,x}=v_0=0\quad\text{at}\quad x=0,\,1.
\ees
We note that equation \rf{Lagr_eq_zcurv} coincides with the porous medium equation 
in~\cite[(1.27)]{Va06} with exponent $m=-1$, which arises from the backward
parabolic {\it super fast diffusion equation}
\bes
u_t=\Delta u^m
\ees
after reversing the time in the latter. Therefore, for investigation of regularity and
qualitative behaviour of solutions to \rf{Lagr_eq_zcurv} in the next theorem
we apply the methods developed for porous medium equations in~\cite{Va07,Va06}.
\begin{theorem}
Suppose,
\be
u_0,\,f\in L^\infty(0,1)\quad\text{and}\quad u_0>0,
\lb{ID}
\ee
then a unique strong solution $u(s,t)$ to problem
\rf{Lagr_eq_zcurv}, \rf{u_BC} with initial data 
\bes
u(s,0)=u_0(s)=M/h_0(x(s,0))
\ees
exists for all $t>0$. For it one has a linear in time {\bf upper} and uniform {\bf lower} bounds:
\be
0<C\leqslant u(s,t)\leqslant||u_0||_\infty+||f||_\infty t\quad\text{for}\quad(x,t)\in(0,\,1)\times(0,\infty),
\lb{MaxPr}
\ee
where constant $C$ depends only on $u_0$. Hence, $u(s,t)$ can not blow up in finite time or touch zero. Moreover, if $f=0$ then there
exist positive constants
\be
A:=\int_0^1  (u_0 -1)^2\,ds,\quad  B:=2\nu\left(\frac{\ln||u_0||_\infty} {||u_0||_\infty - 1}\right)^2
\lb{AB_def}
\ee
such that
\be
||u(\cdot,t)-1||_{L^2(0,\,1)}^2\leqslant A\exp\{-Bt\}\quad\text{for all}\ t>0
\lb{fct}
\ee 
holds, i.e. $L^2$-exponetial asymptotic decay of $u$ to the flat profile.
\lb{Th4.1}
\end{theorem}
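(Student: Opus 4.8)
The plan is to view \rf{Lagr_eq_zcurv} as a one‑dimensional filtration equation of porous‑medium type with exponent $m=-1$ and to lean on the a priori identities \rf{EnEs}--\rf{EEs}. A convenient device is the substitution $w:=1/u=h/M>0$, which turns \rf{Lagr_eq_zcurv} into the quasilinear equation $w_t=\nu\,w^2 w_{ss}-f\,w^2$ with Neumann data $w_s(0,t)=w_s(1,t)=0$ and $w(\cdot,0)=1/u_0$. As soon as $w$ (equivalently $u$) is known to stay between two positive constants on every finite time interval, this equation is uniformly parabolic, and classical quasilinear parabolic theory — a contraction argument on the linearised problem, or the filtration‑equation framework of \cite{Va07,Va06} applied after a standard truncation/regularisation of the coefficient $w^2$ — produces a unique strong solution, smooth for $t>0$. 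The global‑in‑time bounds \rf{MaxPr} derived next then preclude finite‑time blow‑up and extinction, and thereby give existence for all $t>0$.

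For the upper bound in \rf{MaxPr} I would argue by comparison: the spatially constant function $U(t):=\|u_0\|_\infty+\|f\|_\infty\,t$ satisfies $U_t-\nu(U^{-2}U_s)_s-f=\|f\|_\infty-f(s)\geqslant 0$ and $U(0)\geqslant u_0$, so $U$ is a supersolution of \rf{Lagr_eq_zcurv} and the parabolic maximum principle yields $u\leqslant U$.

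The lower bound is the crux, since a priori the (very fast) diffusion could drive $u$ to zero in finite time; the resolution is to combine a monotone quantity coming from \rf{EEs} with the mass constraint \rf{cons_mass}. With $\sigma=0$, \rf{EEs} reads
\[
\tfrac12\tfrac{d}{dt}\int_0^1\Big(\int_0^s u_t\,ds'\Big)^2 ds=-\nu\int_0^1|u_t/u|^2\,ds\leqslant 0 ,
\]
while integrating \rf{Lagr_eq_zcurv} in $s$ and using $u_s(0,t)=0$ gives $\int_0^s u_t\,ds'=-\nu(1/u)_s+F(s)$ with $F(s):=\int_0^s f(\sigma)\,d\sigma$. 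Hence $\|\nu(1/u)_s-F\|_{L^2(0,1)}$ is non‑increasing in time, so $\nu\|(1/u)_s\|_{L^2(0,1)}\leqslant\|F\|_{L^2(0,1)}+\|\nu(1/u_0)_s-F\|_{L^2(0,1)}=:C_0$ uniformly in $t$. On the other hand \rf{cons_mass}, $\int_0^1 u\,ds=1$, together with continuity of $u$ forces $\min_s u(\cdot,t)\leqslant 1$, i.e. $\min_s(1/u)\leqslant 1$; since $\max_s(1/u)-\min_s(1/u)\leqslant\|(1/u)_s\|_{L^1}\leqslant\|(1/u)_s\|_{L^2}\leqslant C_0/\nu$, we get $\max_s(1/u)\leqslant 1+C_0/\nu$, i.e. $u(s,t)\geqslant(1+C_0/\nu)^{-1}>0$ for all $s,t$, a uniform lower bound whose constant depends only on the initial data and $\nu$.

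Finally, for $f=0$ the bound above specialises to $u\leqslant\|u_0\|_\infty=:M_0$ for all $t$. Set $y(t):=\int_0^1(u-1)^2\,ds$. Multiplying \rf{Lagr_eq_zcurv} (with $f=0$) by $u-1$, integrating, and using \rf{u_BC} gives
\[
\tfrac12\,y'(t)=-\nu\int_0^1 u^{-2}u_s^2\,ds=-\nu\int_0^1\big((\ln u)_s\big)^2\,ds .
\]
Because $\int_0^1 u\,ds=1$ and $u$ is continuous, $u$ equals $1$ at some $s^\ast$, so $\ln u$ vanishes there and $|\ln u(s)|\leqslant\int_0^1|(\ln u)_\sigma|\,d\sigma$; by Cauchy--Schwarz $\int_0^1(\ln u)^2\,ds\leqslant\int_0^1\big((\ln u)_s\big)^2\,ds$. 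Moreover $r\mapsto\frac{\ln r}{r-1}=\int_0^1\big(1+\theta(r-1)\big)^{-1}d\theta$ is positive and decreasing on $(0,\infty)$, so $|\ln u|\geqslant\frac{\ln M_0}{M_0-1}\,|u-1|$ pointwise. Combining the last two estimates, $\int_0^1\big((\ln u)_s\big)^2 ds\geqslant\big(\tfrac{\ln M_0}{M_0-1}\big)^2 y(t)$, hence $y'(t)\leqslant -B\,y(t)$ with $B$ as in \rf{AB_def}, and Gronwall's inequality yields \rf{fct}. The step I expect to demand the most care is the lower bound — in particular justifying the comparison principle and the validity of the identity \rf{EEs} within the (regularised) existence scheme before the two‑sided bounds are available.
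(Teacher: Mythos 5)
Your argument is correct, and most of it runs parallel to the paper's proof: existence via quasilinear parabolic theory once two-sided bounds are available, the upper bound in \rf{MaxPr} by comparison/maximum principle (you exhibit the explicit spatially constant supersolution $\|u_0\|_\infty+\|f\|_\infty t$, the paper cites the maximum principle of \cite{Va07}), and the decay \rf{fct} by testing with $u-1$ (equivalent to the paper's test with $u$, since $\int_0^1u\,ds=1$), a one-point Poincar\'e inequality for $\ln u$ anchored at a point where $u=1$, and the monotonicity of $r\mapsto\ln r/(r-1)$ together with \rf{MaxPr}, exactly as in the paper, leading to $y'\leqslant -By$ with the constants \rf{AB_def}. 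The genuine difference is the uniform lower bound. Both proofs first extract $\sup_t\|(1/u)_s\|_{L^2}\leqslant C/\nu$ from \rf{EnEs}--\rf{EEs}; your formulation via the non-increasing quantity $\|\nu(1/u)_s-F\|_{L^2}$ with $F(s)=\int_0^sf$ is equivalent to the paper's combination of the constant entropy with the elementary inequality $(y+z)^2\geqslant y^2/2-z^2$, i.e.\ to \rf{one_u_s}. From there the paper tests \rf{Lagr_eq_zcurv} with $-1/u^2$ to get \rf{EEs_PME}, runs an $\eps$-and-iteration-in-time argument to obtain $\sup_t\int_0^1(1/u)\,ds\leqslant\int_0^1(1/u_0)\,ds$ (estimate \rf{one_u}), and concludes through the embedding $W^{1,1}(0,1)\hookrightarrow L^\infty(0,1)$; you instead use the conservation law \rf{cons_mass} to find, at each time, a point where $u\geqslant 1$ (so $1/u\leqslant 1$ there) and then bound $\max_s(1/u)\leqslant 1+\|(1/u)_s\|_{L^1}\leqslant 1+C_0/\nu$ by the fundamental theorem of calculus. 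Your route is shorter and bypasses the extra identity \rf{EEs_PME} and the iteration entirely; the paper's route delivers in addition the dissipation bound on $\int_0^1 u_s^2/u^5\,ds$, which is however not needed for the theorem. Two small points: the clause ``$\min_s u\leqslant 1$, i.e.\ $\min_s(1/u)\leqslant 1$'' should invoke $\max_s u\geqslant 1$ (which also follows from \rf{cons_mass}) to conclude $\min_s(1/u)\leqslant 1$; and your lower-bound constant depends on $\nu$ and on $\|(1/u_0)_s\|_{L^2}$, but the paper's constant carries exactly the same dependence through \rf{one_u_s}, so this matches its proof (if not the literal wording ``depends only on $u_0$'' in the statement).
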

\begin{proof}[Proof of Theorem~\ref{Th4.1}] Existence part follows by the quasilinear parabolic theory~\cite{LSU68} and the fact that the main term in \rf{Lagr_eq_zcurv}, namely
\bes
\frac{u_{ss}}{u^2} 
\ees
is uniformly coercive unless $u$ becomes infinite at some point. In
particular, the upper bound \rf{MaxPr} follows by an application of the
maximal principle~\cite[Lemma 3.3, p. 34]{Va07}. Note, that this upper bound does not depend on viscosity $\nu$. Indeed, this can be
checked by rescaling time $\bar{t}=\nu t,\ \bar{f}=\nu f$ and applying the maximal principle.
For the global well-posedness of the solution for all $t>0$ it remains to show the uniform lower bound on $u(s,t)$.

First, we note that the entropy and energy equalities reduce for \rf{Lagr_eq_zcurv} to
\bea
\int_0^1\left(\int_0^su_t\,ds'+\nu\left(\frac{1}{u}\right)_s\right)^2\,ds&=&\const,\\
\tfrac{1}{2}\tfrac{d}{dt}\int_0^1\left(\int_0^su_t\,ds'\right)^2\,ds&=&-\nu\int_0^1\left|\frac{u_t}{u}\right|^2\,ds.
\eea
Combining them together and using the inequality $(y+z)^2\geqslant  y^2/2-z^2$ one estimates
\bes
\int_0^1\left(\int_0^su_t\,ds'+\nu\left(\frac{1}{u}\right)_s\right)^2\,ds\geqslant \tfrac{\nu^2}{2}\int_0^1\left|\left(\frac{1}{u}\right)_s\right|^2\,ds-\int_0^1\left(\int_0^su_t\,ds'\right)^2\,ds,
\ees
and therefore, one concludes that
\be
\displaystyle \sup_{t\in(0,\,\infty)}\int_0^1\left|\left(\frac{1}{u}\right)_s\right|^2\,ds\leqslant\const.
\lb{one_u_s}
\ee
Next inequality can be considered as an energy type one for \rf{Lagr_eq_zcurv}
(see~\cite{Va07,Va06}). Testing  \rf{Lagr_eq_zcurv} with $-1/u^2$ 
and subsequently integrating by parts provides the equality
\be
\tfrac{d}{dt}\int_0^1\frac{1}{u}\,ds+2\nu\int_0^1\frac{u_s^2}{u^5}\,ds=-\int_0^1f(s)\left(\frac{1}{u^2}-\int_0^1\frac{ds'}{u^2}\right)\,dsdt,
\lb{EEs_PME}
\ee
where we have used also \rf{zero_f}.
Let us integrate the last equality in time and estimate the right-hand side of it using $L^1$ Poincare inequality as follows.
\beas
&&\Big|\int_0^T\int_0^1f(s)\left(\frac{1}{u^2}-\int_0^1\frac{ds'}{u^2}\right)\,dsdt\Big|\leqslant||f||_\infty\int_0^T\int_0^1\left|\left(\frac{1}{u}\right)_s\right|\,dsdt=||f||_\infty\int_0^T\int_0^1\left|\frac{u_s}{u^3}\right|\,dsdt\\
&&\displaystyle\leqslant ||f||_\infty\int_0^T\sqrt{\int_0^1\frac{u_s^2}{u^5}\,ds}\sqrt{\int_0^1\frac{1}{u}\,ds}dt\leqslant\frac{||f||_\infty}{2}\left[\eps\sup_{t\in(0,\,T]}\int_0^1\frac{1}{u}\,ds+\frac{T}{\eps}\int_0^T\int_0^1\frac{u_s^2}{u^5}\,dsdt\right].
\eeas
Combining this with \rf{EEs_PME} implies for $T=\eps^2=1/||f||_\infty$ that
\bes
\displaystyle\sup_{t\in(0,\,T]}\int_0^1\frac{1}{u}\,ds+C(\nu,||f||_\infty)\iint_{Q_T}\left|\left(\frac{1}{u}\right)_s\right|^2\,dsdt\leqslant\int_0^1\frac{1}{u_0}\,ds.
\ees
But the latter inequality can be extended to
\be
\displaystyle\sup_{t\in(0,\,\infty)}\int_0^1\frac{1}{u}\,ds\leqslant\int_0^1\frac{1}{u_0}\,ds
\lb{one_u}
\ee
via repeating the last argument iteratively on small time intervals $(t,t+1/||f||_\infty^2)$.
We conclude that estimates \rf{one_u_s} and \rf{one_u} imply together that
\bes
\Big|\Big|\frac{1}{u}\Big|\Big|_{L^\infty(0,\infty;W^{1,1}(0,1))}\leqslant\const.
\ees
In turn, the compact embedding $W^{1,1}(0,\,1)\hookrightarrow L^\infty(0,\,1)$ implies
\bes
\Big|\Big|\frac{1}{u}\Big|\Big|_{L^\infty((0,\,1)\times(0,\infty))}\leqslant C,
\ees
i.e. the uniform lower bound in \rf{MaxPr}.

It remains to show \rf{fct} when $f=0$. For that we test \rf{Lagr_eq_zcurv}
with $u$ and integrate by parts to get:
\be
\tfrac{1}{2}\tfrac{d}{dt}\int_0^1 u^2\,ds+\nu\int_0^1 (\ln u)_s^2\,ds=0.
\lb{EE_a_1}
\ee
Using Poincare inequality and \rf{cons_mass} one gets 
\bes
\int_0^1 \left|\ln (u) )\right|^2 ds \leqslant  \int_0^1 (\ln(u))_s^2 ds.
\ees
Next, using \rf{MaxPr} one estimates
\bes
\left|\ln (u(x,t)) \right|^2 \geqslant  \left|\frac{\ln||u_0||_\infty} {||u_0||_\infty-1}\right|^2 (u - 1)^2\quad\text{for all}\quad (x,\,t)\in Q_T.
\ees
Combining the last two estimates one gets from \rf{EE_a_1}:
\bes
\int_0^1 ( u -1 )^2 ds  + B \int_0^t \int_0^1 ( u-1)^2\, dsdt  \leqslant   A
\ees
with $A$ and $B$ defined as in \rf{AB_def}. Here we have used also the equality
\bes
\int_0^1  u^2 ds = \int_0^1 (u -1)^2 ds + 1.
\ees
Whence by comparing $y(t):=\int_0^1 ( u -1 )^2 ds$ to the solution $\bar{y}(t)$ of the ODE
\bes
\bar{y}'(t)+B\bar{y}(t)=0,\quad\bar{y}(0)=A,
\ees
one obtains 
\bes
0\leqslant y(t)\leqslant A\exp\{-Bt\}\quad\text{for all}\ t>0.
\ees
The last estimate gives \rf{fct}.
\end{proof}
\begin{remark}
The estimates \rf{MaxPr} together with \rf{h_u_rel} show for
the original system \rf{SSM} considered with general initial data and $\sigma=0$ that the height profile stays bounded for all
times from above and below, respectively, but do not preclude it from approaching zero in infinite time with the rate independent of $\nu$.
Note that the bound \rf{MaxPr} holds also in the limiting case of
\rf{Lagr_eq_zcurv}  considered with $\nu=0$, which can be shown in this case via an explicit integration. 
\end{remark}
\begin{remark}
We note that estimate \rf{fct} implies the exponential asymptotic decay to
constant profile $(h,\,v)=(M,\,0)$ for a special class of solutions to original viscous sheet system \rf{SSM}.
Indeed, using Lagrangian transformation \rf{x_s} and formula \rf{f_def} one obtains that any solution to \rf{SSM} having initial data satisfying
\be
v_0(x)=-\nu\frac{h_{0x}(x)}{h_0(x)}\quad\text{and}\quad h_0>0
\lb{CompCond}
\ee
should decay to  $(h,\,v)=(M,\,0)$ in finite time. 
Relation \rf{CompCond} written in Lagrangian coordinates takes the form:
\be
v_0(x(s,0))=-\tfrac{\nu}{M} h_{0s}(x(s,0)).
\lb{CompCondl}
\ee
Figures 2-3 present numerical confirmation of the convergence to the flat profile for $\nu=1$. In the first row we have taken special initial data
\bea
h_0(x(s,\,0))&=&\cos(\pi s)+\pi,\quad v_0(x(s,\,0))=\frac{\pi}{\sqrt{\pi^2-1}}\sin(\pi s),\nonumber\\
x(s,\,0)&=&M\int_0^s\frac{ds'}{h_0(x(s',\,0))}=\frac{2}{\pi}\arctan\left(\frac{\tan(\frac{\pi}{2}s)(\pi-1)}{\sqrt{\pi^2-1}}\right)
\lb{flat}
\eea
which satisfy \rf{CompCond} and \rf{CompCondl}. The solution to \rf{SSM} considered with $\sigma=0$ and initial data defined by \rf{flat} converges to the flat profile $(h,v)=(\sqrt{\pi^2-1},\,0)$. 

Next, we slightly modified the initial data \rf{flat} to
\bea
h_0(x(s,\,0))&=&\cos(\pi s)+\pi,\quad v_0(x(s,\,0))=\pi\sin(\pi s),\nonumber\\
x(s,\,0)&=&M\int_0^s\frac{ds'}{h_0(x(s',\,0))}=\frac{2}{\pi}\arctan\left(\frac{\tan(\frac{\pi}{2}s)(\pi-1)}{\sqrt{\pi^2-1}}\right),
\lb{non_flat}
\eea
so that in this case
\be
f(s)=\pi^2\cos(\pi s)\left(1-\frac{1}{\sqrt{\pi^2-1}}\right)\not =0.
\lb{f_ex}
\ee
The second row in Fig. 2 shows the dynamical snapshots of the solution to \rf{SSM} with initial data given by \rf{non_flat}.
The solution converges then in finite time to the stationary solution of \rf{Lagr_eq_zcurv} satisfying the ODE:
\be
\nu\left(\frac{1}{u_\infty}\right)_{ss}=f(s).
\lb{SS}
\ee
The last ODE with the right-hand side \rf{f_ex}, $\nu=1$ and boundary conditions \rf{u_BC} can be integrated to give explicitly
$u_\infty$ and the corresponding limiting height profile as
\bea
h_\infty(x_\infty(s))&=&\frac{M}{u_\infty(s)}=-M\left[\cos(\pi s)\left(1-\frac{1}{\sqrt{\pi^2-1}}\right)+C_0\right],\nonumber\\
x_\infty(s)&=&M\int_0^s\frac{ds'}{h_\infty(x_\infty(s'))}=-\int_0^s\frac{ds'}{\cos(\pi s')\left(1-\frac{1}{\sqrt{\pi^2-1}}\right)+C_0},
\lb{final_non_flat} 
\eea
where $C_0\approx -1.2$ is fixed by the conservation of mass laws \rf{CM} or equivalently \rf{cons_mass}.

Finally, in Fig. 3 we present the numerically calculated decay of $||u(s,t)-1||_2$, with
\bes
u(s,t)=\frac{M}{h(x(s,t),t)}
\ees
according to \rf{h_u_rel}, for the solution to \rf{SSM} with initial data  \rf{flat}. The decay is profoundly exponential with the numerical saturation effect
when $||u(s,t)-1||_2$ becomes small. Comparison with the derived analytical bound \rf{fct} shows that the latter is the upper bound but is naturally not sharp.
%%%%%%%%%%%%%%%%%%%%%%%%%%%%%%%%%%%%%%%%%%%%%%%%%%%%%%%%%
\begin{figure}
\centering
\includegraphics*[width=0.38\textwidth]{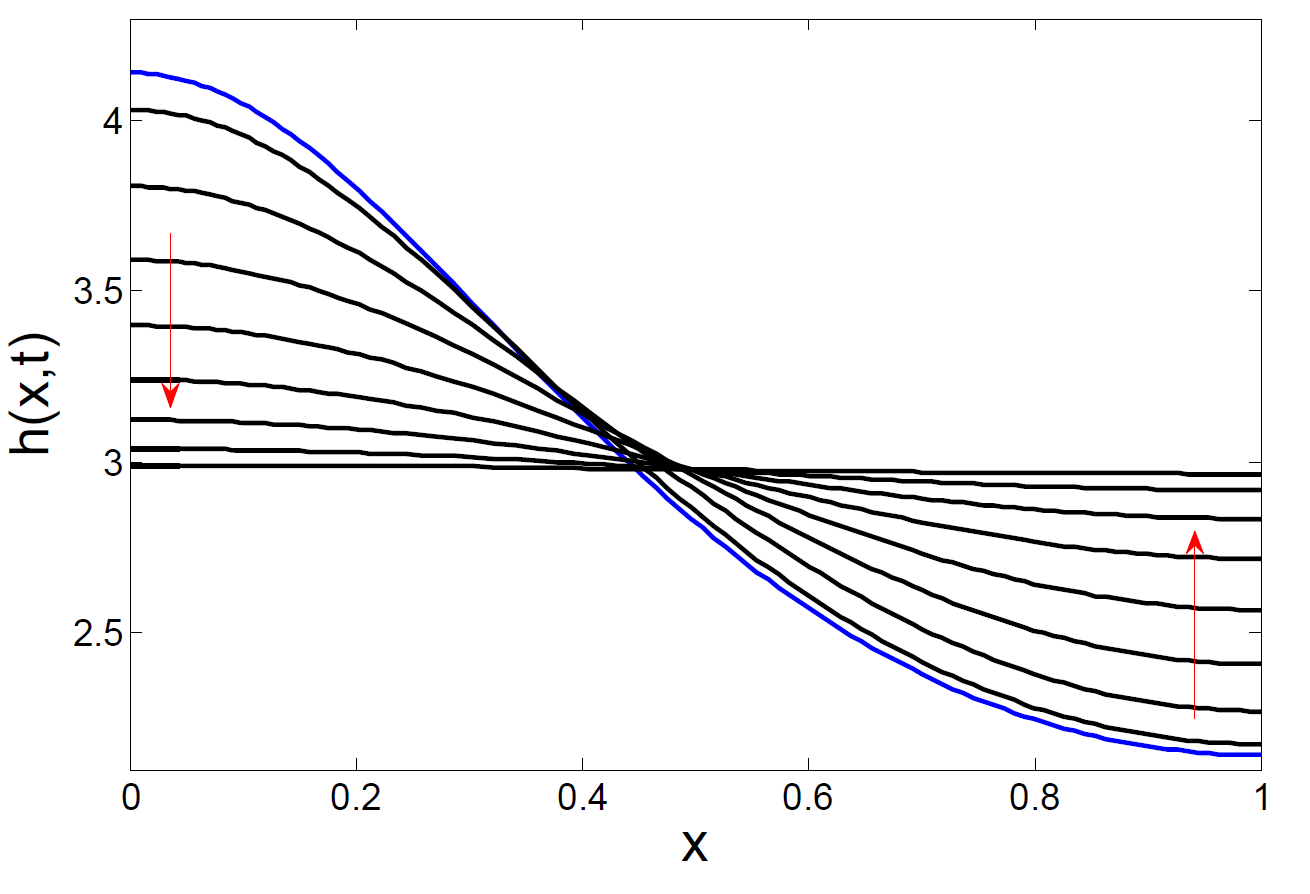}
\includegraphics*[width=0.37\textwidth]{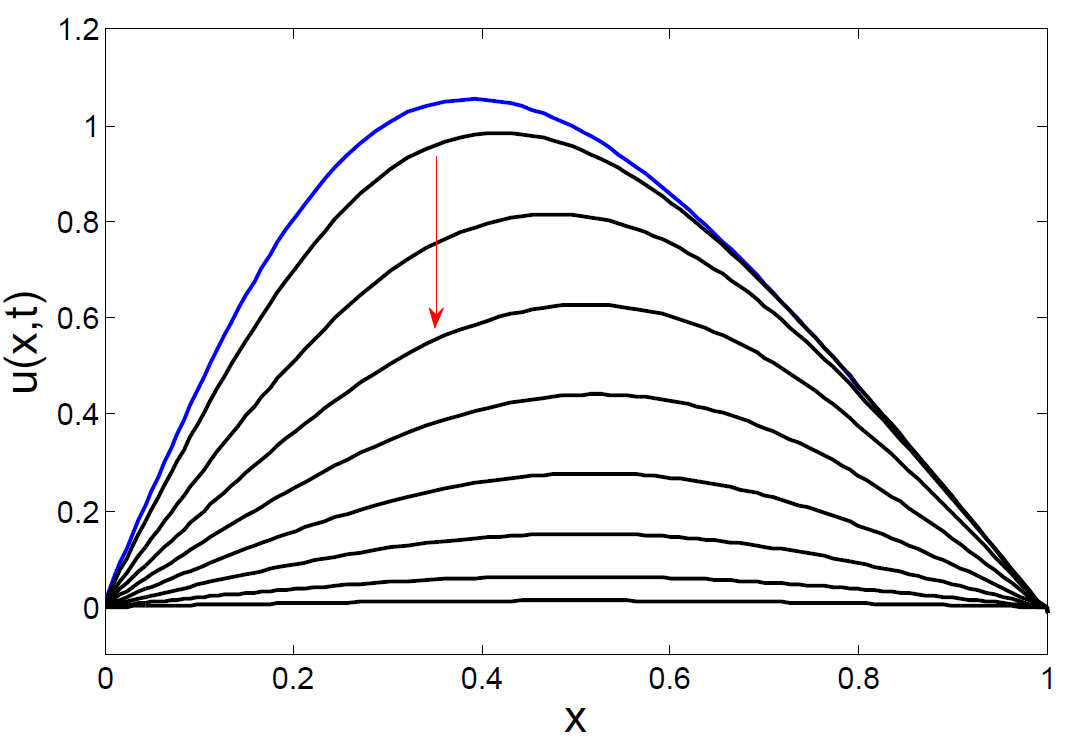}
\includegraphics*[width=0.37\textwidth]{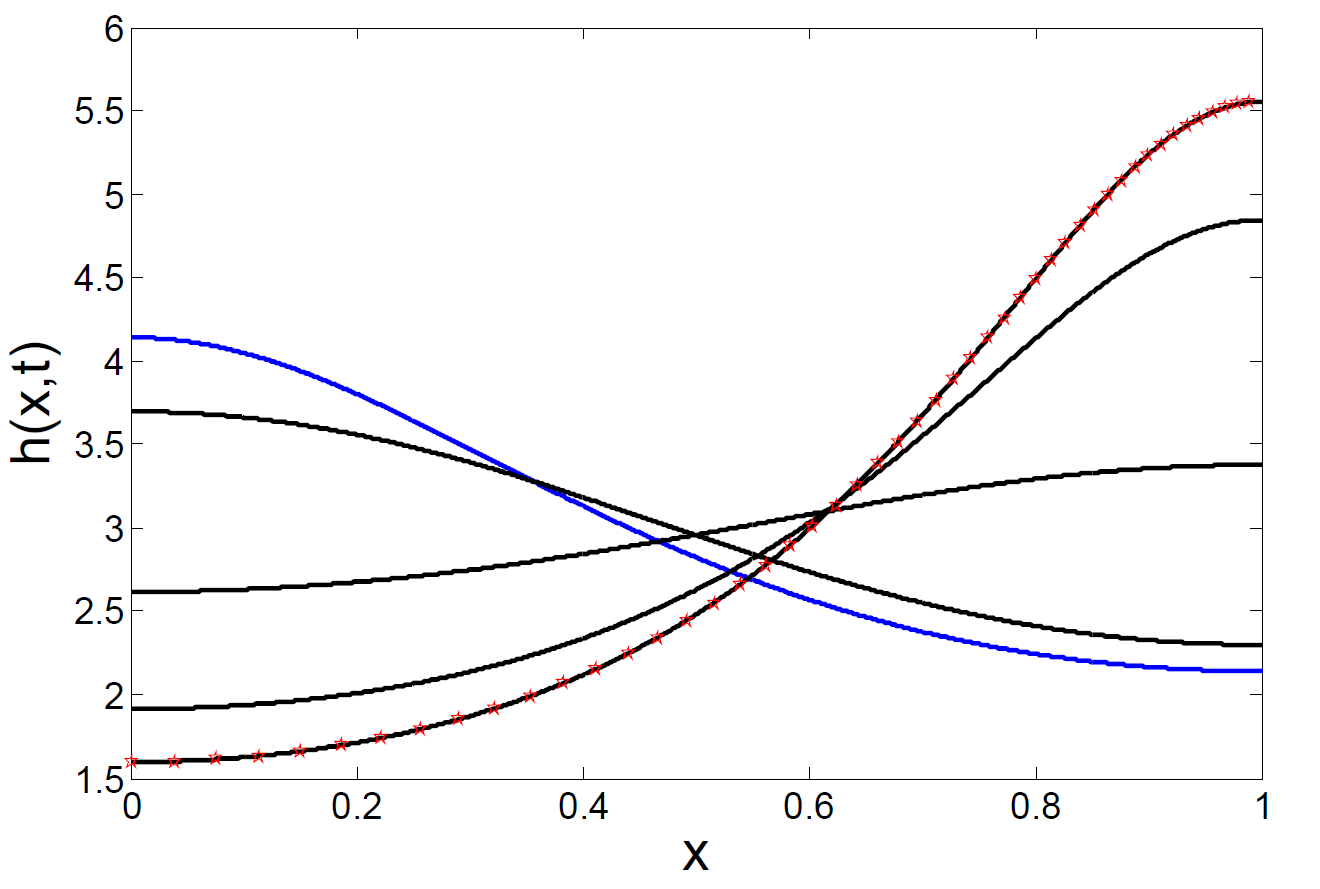}
\includegraphics*[width=0.37\textwidth]{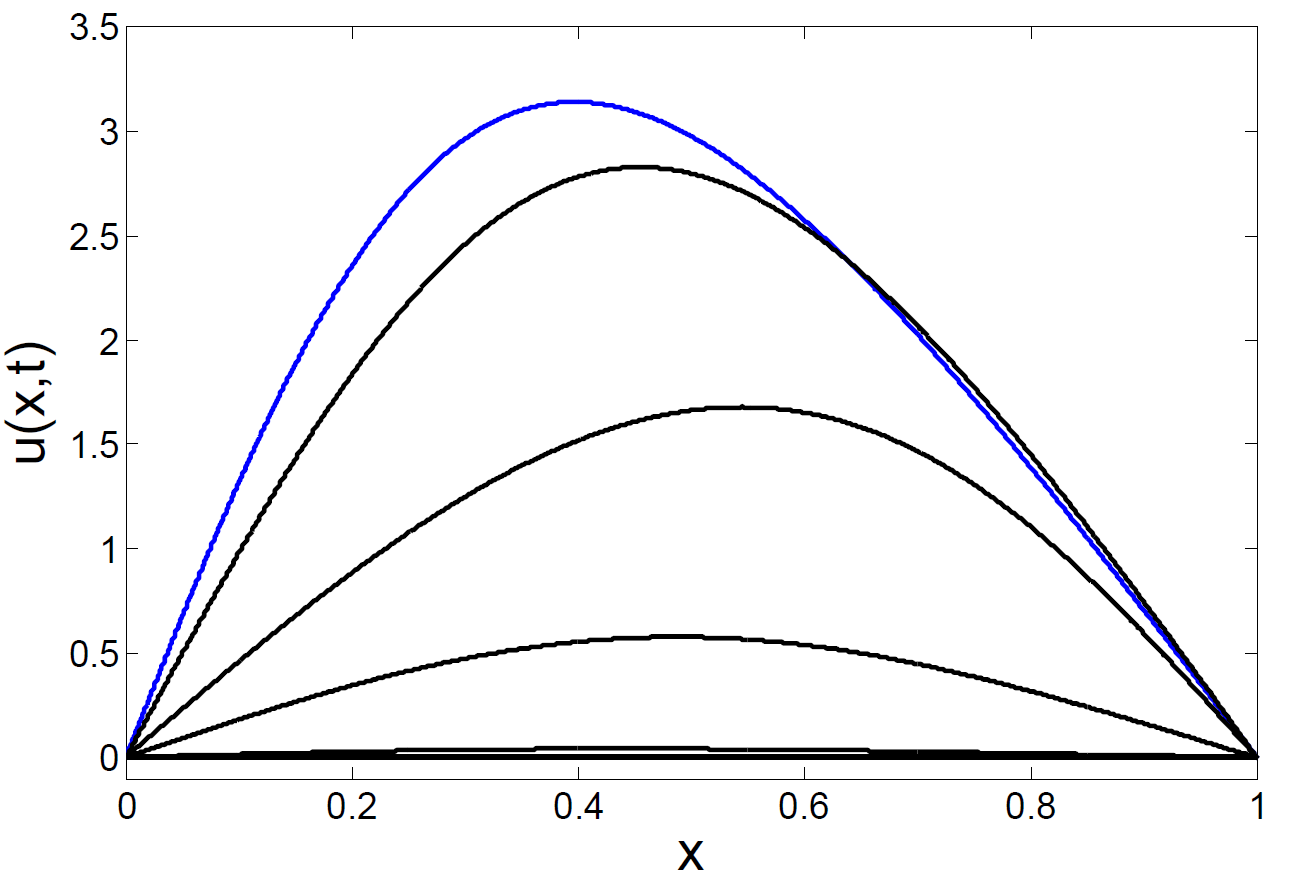}
\caption{Convergence to stationary profiles of solutions to \rf{SSM} with
  $\nu=1$ and $\sigma=0$ illustrated by several snapshots of the height (left column) and velocity profiles (right column).
First row: convergence to the flat profile $(M,\,0)$ for  the initial data \rf{flat}. Second row: convergence to nonstationary height profile $h_\infty$
with $v_\infty=0$ for the initial data \rf{non_flat}. The initial data in both
cases is depicted by blue solid lines. The red dotted curve corresponds to the
analytical formula \rf{final_non_flat} for $h_\infty(x)$ and is superimposed
onto the final numerical height profile in the second simulation.
For obtaining numerical solutions to \rf{SSM} the numerical code developed in~\cite{KW10,KFE17} was used.}
\label{pinch_ex}
\end{figure}
%%%%%%%%%%%%%%%%%%%%%%%%%%%%%%%%%%%%%%%%%%%%%%%%%%%%%%%%%%
%%%%%%%%%%%%%%%%%%%%%%%%%%%%%%%%%%%%%%%%%%%%%%%%%%%%%%%%%
\begin{figure}
\centering
\includegraphics*[width=0.5\textwidth]{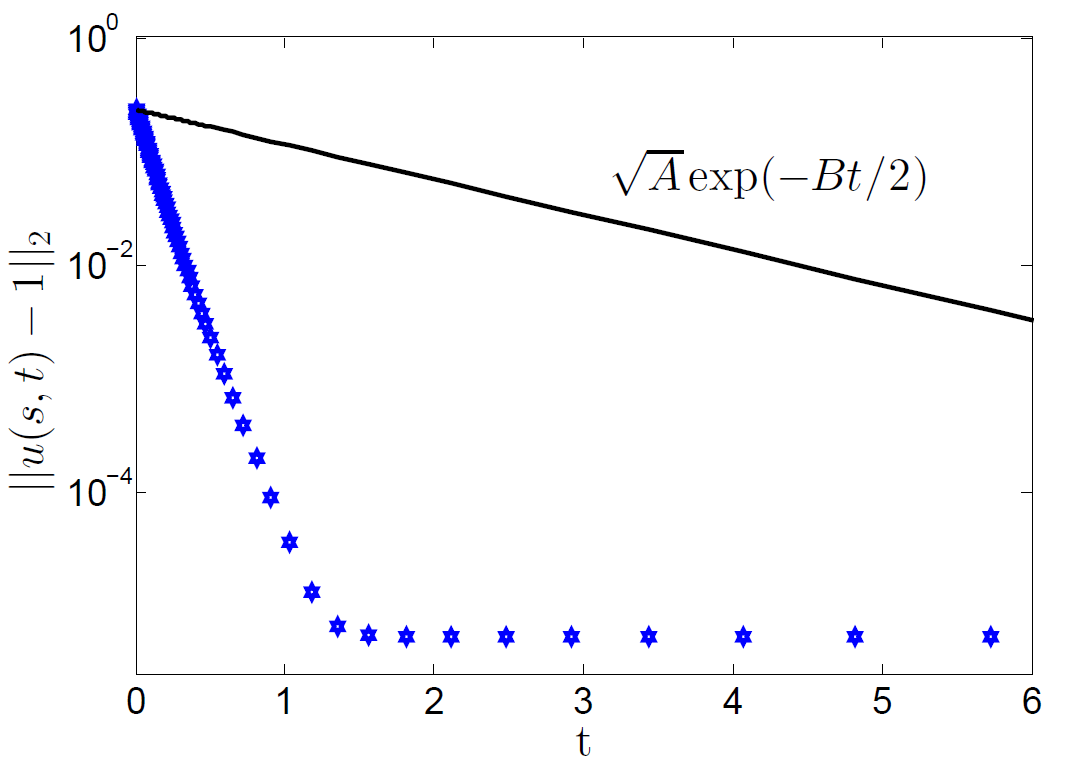}
\caption{Semilog plot (blue markers) of the decay of $||u(s,t)-1||_2$ corresponding to the solution to \rf{SSM} with initial data  \rf{flat}. 
Solid line indicates the upper bound \rf{fct} with $||u_0||_\infty=\sqrt{\pi^2-1}/(\pi-1)$ and $B=1.4252,\ A=0.0549$. For obtaining numerical solutions to \rf{SSM} the numerical code developed in~\cite{KW10,KFE17} was used.}
\label{pinch_ex}
\end{figure}
%%%%%%%%%%%%%%%%%%%%%%%%%%%%%%%%%%%%%%%%%%%%%%%%%%%%%%%%%%

\end{remark}

%%%%%%%%%%%%%%%%%%%%%%%%%%%%%%%%%%%%%%%%%%%%%%%%%%%%%%%%%%%%%%%%%%%%%%%%%%%%%%
\section{Non-rupture in the case $\sigma>0$.}
%%%%%%%%%%%%%%%%%%%%%%%%%%%%%%%%%%%%%%%%%%%%%%%%%%%%%%%%%%%%%%%%%%%%%%%%%%%%%%
We consider now the full equation \rf{Lagr_eq}. We assume without loss of generality $\sigma\,M=1$, by rescaling the time variable, and rewrite it in terms of the variable $h(s,t):=h(x(s,t),t)$ using \rf{h_u_rel} as
\bes
-\left(\frac{h_{t}}{h^2}\right)_t+\nu h_{tss}=\left(h\left(h\left(hh_s\right)_s\right)_s\right)_s.
\ees
The last equation is equivalent to
\be
-\left(\frac{h_{t}}{h^2}\right)_t+\nu h_{tss}=\left(\frac{h^2}{2}h_s^2+h^3h_{ss}\right)_{ss}.
\lb{Lagr_eq_h}
\ee
Note that from the energy and entropy inequalities \rf{EnEs}-\rf{EEs}, using again \rf{h_u_rel}, we have a priori bounds:
\be
||h_s||_{L^{\infty}(0,T;L^2(0,1))}\leqslant \frac{C}{\nu}\quad\text{and}\quad||h_t||_{L^2(Q_T)}\leqslant  \frac{C}{\sqrt{\nu}}.
\lb{HEst_1}
\ee
For showing the second estimate in \rf{HEst_1} we have used additionally equivalence of the Euler \rf{SSM} and Lagrange \rf{Lagr_eq_h} systems provided by transformation \rf{x_s}
and the uniform upper bound
\bes
h(s,t)\leqslant C
\ees
following from \rf{EEs_o}. In \rf{HEst_1} and below in this section $C$ denotes a constant that may depend on the initial data only, but not on the parameters $\nu$ and $T$ of the problem.
Using the first estimate in \rf{HEst_1} one naturally gets the uniform H\"older continuity in space:
\be
|h(s_1,t)-h(s_2,t)|\leqslant  \frac{C}{\nu}|s_1-s_2|^{1/2}.
\lb{HEst_2}
\ee
Next, applying~\cite[ Lemma 7.19]{Va07}, in particular (7.32) there, one obtains from \rf{HEst_1}
\be
|h(s,t_1)-h(s,t_2)|\leqslant C||h_s||_{L^{\infty}(0,T;L^2(0,1))}||h_t||_{L^2(Q_T)}|t_1-t_2|^{1/4}\leqslant  \frac{C}{\nu^{3/2}}|t_1-t_2|^{1/4}.
\lb{HEst_3}
\ee
From the last two estimates we conclude $h\in C_{s,t}^{1/2,\,1/4}(\bar{Q}_T)$. Note that a priori estimates \rf{HEst_1}--\rf{HEst_3} do not depend on smallness of $h$
and, therefore, should be also valid in the vicinity of a rupture point if it happens.

Below we provide a formal analytical  argument for the fact that solutions to \rf{Lagr_eq_h}  considered with general initial data can not rupture in finite time.
The proof proceeds by contradiction. We will assume that there is a solution to  \rf{Lagr_eq_h} that ruptures in finite time.
Let us integrate  \rf{Lagr_eq_h} once in time and write it as
\be
h_t=\nu h^2h_{ss}-h^2\int_0^t\left[\frac{h^2}{2}h_s^2+h^3h_{ss}\right]_{ss}\,d\tau-h^2f(s),
\lb{int_eq}
\ee
where
\bes
f(s):=-h_t(s,0)/h^2(s,0)+\nu h_{ss}(s,0).
\ees
Close to the expected rupture time $t=t_r$ we will zoom into the neighborhood
of the pinch-off point and assume that there the rupture solution to
\rf{Lagr_eq_h} is well approximated by the solution to
\be
h_t=\nu\eps^2h_{ss}-\eps^2\left(\int_0^t\left[\frac{h^2}{2}h_s^2+h^3h_{ss}\right]_{ss}\,d\tau-f(s)\right),
\lb{appr_eq}
\ee
considered now in the whole real line $s\in\mR$ via a suitable extension of $h$ to a small constant profile outside of the pinch-off region.
Additionally, we will assume that the following analogs of a priori bounds \rf{HEst_1}--\rf{HEst_3} 
\bea
\lb{HEst_1a}
||h_s||_{L^{\infty}(0,T;L^2(\mR))}&\leqslant&\frac{C}{\nu},\\
|h(s_1,t)-h(s_2,t)|&\leqslant& \frac{C}{\nu}|s_1-s_2|^{1/2},\ |h(s,t_1)-h(s,t_2)|\leqslant \frac{C}{\nu^{3/2}}|t_1-t_2|^{1/4},\nonumber\\
&&\text{for all}\ s_1,\,s_2\in\mR\ \text{and}\ t_1,\,t_2\in[0,\,t_r(\eps)),
\lb{HEst_2a}
\eea
still hold for the approximating solution to \rf{appr_eq}, where constant $C$ does not depend on $\eps$.
Next, by applying an appropriate time shift we assume that the initial data for \rf{appr_eq} satisfies the uniform spatial bounds:
\be
0<\eps\leqslant h(s,0)=h_0(s)\leqslant C\eps\quad\text{with}\quad C>1.
\lb{Indata}
\ee
Note, that the above time shift implies also that the finite rupture time of the solution to \rf{appr_eq}
\be
t_r=t_r(\eps)\go 0\quad\text{as}\quad \eps\go 0.
\lb{t_r}
\ee
Naturally, for the small initial data \rf{Indata}  and times close to
$t=t_r(\eps)$ one would expect the solution of the approximate equation
\rf{appr_eq} stay close to the rupture solution to the original equation
\rf{int_eq}. Finally, we will make a technical assumption that the approximate
solution to \rf{appr_eq} satisfying \rf{Indata} stays of the same order
smallness up to the rupture time, i.e.
\be
h(s,t)\leqslant C\eps\quad\text{for}\quad t\in[0,\,t_r(\eps)).
\lb{small_bnd}
\ee
Let us now write the exact integral representation of solutions to \rf{appr_eq} :
\bea
h(s,t)&=&\int_{-\infty}^{\infty}K_{\nu,\eps}(s-\xi,t)h_0(\xi)\,d\xi\nonumber\\
&-&\eps^2\int_0^t\int_{-\infty}^{\infty}K_{\nu,\eps}(s-\xi,t-\tau)\left(\int_0^\tau\left[\frac{h^2}{2}h_\xi^2+h^3h_{\xi\xi}\right]_{\xi\xi}\,d\tilde{\tau}+f(s)\right)\,d\xi d\tau,
\lb{HeatRepr}
\eea
where $K_{\nu,\eps}(s,t)$ denotes the heat kernel:
\bes
K_{\nu,\eps}(s,t):=\frac{1}{\sqrt{4\pi\nu\eps^2t}}\exp\left[-\frac{s^2}{4\nu\eps^2t}\right]
\ees
having the well-known property
\bes
\frac{\partial K_{\nu,\eps}}{\partial t}=\nu\eps^2\frac{\partial^2 K_{\nu,\eps}}{\partial\xi^2}.
\ees
Using assumption \rf{Indata} one can show that the following estimate from below for the first integral in \rf{HeatRepr}:
\be
\int_{-\infty}^{\infty}K_{\nu,\eps}(s-\xi,t)h_0(\xi)\,d\xi\geqslant  \frac{\eps}{\sqrt{\pi}}\int_{-\infty}^{\infty}\exp[-r^2]\,dr=\eps.
\lb{Indata_est}
\ee
In the rest of the proof we will estimate the second integral in \rf{HeatRepr}
from above and show that contributions from the nonlinear curvature term
become smaller then the right hand-side of \rf{Indata_est} when 
$\eps$ is sufficiently small and satifies the relation \rf{eps_nu} below. First, one estimates
\bea
\eps^2\Big|\int_0^t\int_{-\infty}^{\infty}K_{\nu,\eps}(s-\xi,t-\tau)f(s)\,d\xi d\tau\Big|\leqslant\eps^2||f(s)||_\infty\frac{t}{\sqrt{\pi}}\int_{-\infty}^{\infty}\exp[-r^2]\,dr=\eps^2t||f(s)||_\infty.
\lb{Indata_est1}
\eea
Next, one shows using several times integration by parts that
\bea
&&\eps^2\int_0^t\int_{-\infty}^{\infty}K_{\nu,\eps}(s-\xi,t-\tau)\int_0^\tau\left[\frac{h^2}{2}h_\xi^2+h^3h_{\xi\xi}\right]_{\xi\xi}\,d\tilde{\tau}d\xi d\tau=\nonumber\\
&&-\frac{1}{\nu}\int_0^t\int_{-\infty}^{\infty}\frac{\partial K_{\nu,\eps}(s-\xi,t-\tau)}{\partial\tau}\int_0^\tau\left[\frac{h^2}{2}h_\xi^2+h^3h_{\xi\xi}\right]\,d\tilde{\tau}d\xi d\tau\nonumber\\
&&=\frac{1}{\nu}\int_0^t\int_{-\infty}^{\infty}K_{\nu,\eps}(s-\xi,t-\tau)\left[\frac{h^2}{2}h_\xi^2+h^3h_{\xi\xi}\right]\,d\xi d\tau\nonumber\\
&&=-\frac{5}{2\nu}\int_0^t\int_{-\infty}^{\infty}K_{\nu,\eps}(s-\xi,t-\tau)h^2h_\xi^2\,d\xi d\tau-\frac{1}{4\nu^2\eps^2}\int_0^t\int_{-\infty}^{\infty}\frac{\partial K_{\nu,\eps}(s-\xi,t-\tau)}{\partial\tau}h^4\,d\xi d\tau\nonumber\\
&&=-\frac{5}{2\nu}\int_0^t\int_{-\infty}^{\infty}K_{\nu,\eps}(s-\xi,t-\tau)h^2h_\xi^2\,d\xi d\tau-\frac{1}{8\nu^2\eps^2}\int_0^t\int_{-\infty}^{\infty}K_{\nu,\eps}(s-\xi,t-\tau)\frac{h^4}{(t-\tau)}\,d\xi d\tau\nonumber\\
&&+\frac{1}{4\nu^2\eps^2}\int_0^t\int_{-\infty}^{\infty}K_{\nu,\eps}(s-\xi,t-\tau)\frac{h^4}{(t-\tau)}\frac{(s-\xi)^2}{4\nu\eps^2(t-\tau)}\,d\xi d\tau\nonumber\\
&&=:I_1+I_2+I_3.
\lb{RHS_est}
\eea
Let us estimate consequently the integrals $I_i,\,i=1,2,3$ in \rf{RHS_est} at the special point $s=s_r$, where the rupture occurs at time $t=t_r$.
At this point, using the H\"older estimates \rf{HEst_2a} one obtains
\be
h(\xi,\tau)\leqslant h(s_r,\tau)+\frac{C}{\nu}|\xi-s_r|^{1/2}\leqslant \frac{C}{\nu^{3/2}}|t_r-\tau|^{1/4}+\frac{C}{\nu}|\xi-s_r|^{1/2}.
\lb{r_p_est}
\ee
First, using \rf{HEst_1a} and \rf{small_bnd} one estimates at $t=t_r(\eps)$
\be
|I_1|=\frac{5}{2\nu}\int_0^t\int_{-\infty}^{\infty}K_{\nu,\eps}(s_r-\xi,t-\tau)h^2h_\xi^2\,d\xi d\tau\leqslant \frac{C\eps}{\sqrt{\nu^3}}\int_0^t\frac{d\tau}{\sqrt{t-\tau}}\int_{-\infty}^{\infty}h_\xi^2\,d\xi\leqslant\frac{C\eps\sqrt{t}}{\sqrt{\nu^3}}.
\lb{RHS_est_1}
\ee
Next, using \rf{r_p_est} and \rf{small_bnd} one obtains
\bea
|I_2|&=&\frac{1}{8\nu^2\eps^2}\int_0^t\int_{-\infty}^{\infty}K_{\nu,\eps}(s_r-\xi,t-\tau)\frac{h^4(\xi,\,\tau)}{(t-\tau)}\,d\xi d\tau\nonumber\\
&\leqslant&\frac{C\eps}{\nu^2}\int_0^t\int_{-\infty}^{\infty}K_{\nu,\eps}(s_r-\xi,t-\tau)\frac{h(\xi,\,\tau)}{(t-\tau)}\,d\xi d\tau\nonumber\\
&\leqslant&\frac{C\eps}{\nu^2}\int_0^t\frac{1}{(t-\tau)^{3/4}}\frac{1}{\nu^{3/2}}\int_{-\infty}^{\infty}K_{\nu,\eps}(s_r-\xi,t-\tau)\,d\xi d\tau\nonumber\\
&+&\frac{C\eps}{\nu^2}\int_0^t\frac{1}{(t-\tau)^{3/4}}\frac{(4\nu\eps^2)^{1/4}}{\nu}\int_{-\infty}^{\infty}K_{\nu,\eps}(s_r-\xi,t-\tau)\sqrt{\frac{|\xi-s_r|}{\sqrt{(t-\tau)4\nu\eps^2}}}\,d\xi d\tau\nonumber\\
&\leqslant&\frac{C\eps}{\nu^2}\int_0^t\frac{1}{(t-\tau)^{3/4}}\left( \frac{1}{\nu^{3/2}}+\frac{(4\nu\eps^2)^{1/4}}{\nu}\right)d\tau\leqslant\frac{C\eps t^{1/4}}{\nu^{7/2}}\quad\text{for}\ \eps<1\ \text{and}\ \nu<1.
\lb{RHS_est_2}
\eea
Finally, proceeding analogously as in \rf{RHS_est_2} one shows that
\be
|I_3|\leqslant\frac{C\eps t^{1/4}}{\nu^{7/2}}
\lb{RHS_est_3}
\ee
Therefore, combining \rf{RHS_est_1}-\rf{RHS_est_3} with \rf{t_r} and taking $\eps$ so small that 
\be
t_r(\eps)<C_1\nu^{14}
\lb{t_r_nu}
\ee
one obtains that
\bes
|I_1+I_2+I_3|\leqslant\frac{\eps}{2}.
\ees
The last estimate combined with \rf{Indata_est1}  and  \rf{Indata_est}  implies by \rf{HeatRepr} that for all sufficiently small $\eps>0$
\bes
h(s_r,t)\geqslant  \frac{\eps}{4}\quad\text{for all}\quad t\in[0,\,t_r(\eps)],
\ees
which contradicts to the initial assumption of the rupture of the solution to \rf{appr_eq} at the finite time $t=t_r(\eps)$.
\begin{remark}
We note, a posteriori, that the estimate on the rupture time \rf{t_r_nu} together with the second estimate in \rf{HEst_2a} and assumption on the initial data \rf{Indata}
imply that
\be
0<\eps\leqslant h(s_r,0)\leqslant \frac{Ct_r(\eps)^{1/4}}{\nu^{3/2}}\leqslant C\nu^2.
\lb{eps_nu}
\ee
This condition on the minimal height at which the viscous term starts to dominate the curvature term is consistent both with the self-similar scalings~\cite{EF15} and numerical observations.
\end{remark}

%%%%%%%%%%%%%%%%%%%%%%%%%%%%%%%%%%%%%%%%%%%%%%%%%%%%%%%%%%%%%%%%%%%%%%%%%%%%%%
\section{Discussion}
%%%%%%%%%%%%%%%%%%%%%%%%%%%%%%%%%%%%%%%%%%%%%%%%%%%%%%%%%%%%%%%%%%%%%%%%%%%%%%

This article presents several interesting analytical results on qualitative behaviour of solutions to viscous sheet system \rf{SSM} and its Lagrangian counterpart \rf{Lagr_eq}. 
It poses also several interesting open questions summarised below: 
\begin{itemize}
\item Theorem 2.1 allows for $(H^1,\,L^2)$ weak solutions with non-negative initial data. It would be interesting to investigate implications of estimates \rf{B:uc}-\rf{B:3}
for possible analyitical bounds from above  and below on the support of such solutions in time. In particular, one could characterise a class of initial data with support less then $(0,1)$
with solution to \rf{SSM} having full support $(0,1)$ for all positive times. i.e. demonstrating the immediate initial healing of the dry spots.
\item Theorem 4.1 shows exponential decay to constant profile for
solutions to \rf{SSM} with $\sigma=0$ and special initial data \rf{CompCond},
see Remark 4.4. Numerical solutions of system \rf{SSM} indicate that the class of
initial data for which similar results hold can be extended (see Fig. 2,
second row). Also in the case $f\not=0$ in \rf{Lagr_eq_zcurv} the finite
decay should be then not to the constant profile $u=1$, 
but rather to the spatially inhomogeneous solution to stationary equation \rf{SS}.
In this context it is interesting to point out existence of the Lie-B\"acklund transform, which
maps equation \rf{Lagr_eq_zcurv} into the classical heat equation. Explicit connections between
solutions to these two equations were derived in~\cite{BK80}.
\item We should point out the difference of the results of Theorem 4.1 to known ones for equation \rf{Lagr_eq_zcurv}
considered on the whole real line $\mR$. In particular, there were shown in this case non-existence of $L^1$ solutions and
extinction phenomena ($u\go 0$) in the finite time (see~\cite{Va06} and references therein). In contrast, while considering \rf{Lagr_eq_zcurv} on the finite interval $(0,\,1)$ and
analysing convergence to $u=1$ instead, we were able to show existence of the unique strong positive solution to  \rf{Lagr_eq_zcurv}  and to escape from these singularities.
Also we believe that the exponential decay shown in estimate \rf{fct} cannot be improved to the finite time decay for the same reason. Indeed, linearising equation \rf{Lagr_eq_zcurv} at $u=1$
results in the classical heat equation for $\bar{u}=u-1$:
\bes
\bar{u}_t=\nu\bar{u}_{ss},
\ees
which suggests that close to $u=1$ solutions to  \rf{Lagr_eq_zcurv}  at most exponentially decay in infinite time.
\item The analytical argument of section 5 shows the right lower bound \rf{LB} for the minimum of the height in the regime $\nu\ll 1$, see Remark 5.1 and Fig. 1.
In order to make this argument rigorous we suggest that a comparison principle between solutions of \rf{int_eq} and the approximate equation \rf{appr_eq} should be proposed.
At the moment it is not obvious as both equations are nonlinear parabolic of the fourth-order.
\item Theorem A.2 in contrast to Theorem 2.1 relies on the uniform bound from below $m$ in \rf{h_mbnd} for the radially symmetric solutions to system \rf{SSMr}. In the spirit of this study it would be
interesting to provide an analytical argument, similar to one in section 5 for the one-dimensional system \rf{SSM}, showing that solutions to \rf{SSMr} can not rupture in finite time.
 Alternatively, one could think on possible analytical estimates for $m$ depending on the initial data $(h_0,v_0)$. Such results would have an important implication for possibility of point rupture in 2D viscous sheets considered in~\cite{VLW01}.
\end{itemize}

%%%%%%%%%%%%%%%%%%%%%%%%%%%%%%%%%%%%%%%%%%%%%%%%%%%%%%%%%%%%%%%%%%%%%%%
\section{Acknowledgements}
%%%%%%%%%%%%%%%%%%%%%%%%%%%%%%%%%%%%%%%%%%%%%%%%%%%%%%%%%%%%%%%%%%%%%%%
The authors would like to thank Jens Eggers for pointing to them out the low bound \rf{LB} for the solutions to \rf{SSM} as well as for valuable comments on the results of this study.
GK would like to acknowledge support from Leverhulme grant RPG-2014-226. GK gratefully acknowledges the hospitality of ICMAT during a research visit to Madrid.
Part of this research was performed during participation of authors at the thematic research program "Nonlinear Flows" of the Erwin Schr\"odinger International Institute for Mathematics and Physics.

%%%%%%%%%%%%%%%%%%%%%%%%%%%%%%%%%%%%%%%%%%%%%%%%%%%%%%%%%%%%%%%%%%%%%%%%%%%%%%
\appendix
\section{Asymptotic decay for the radially symmetric solutions}
\renewcommand{\theequation}{A.\arabic{equation}}
\setcounter{equation}{0}
%%%%%%%%%%%%%%%%%%%%%%%%%%%%%%%%%%%%%%%%%%%%%%%%%%%%%%%%%%%%%%%%%%%%%%%%%%%%%%

In 2D space the viscous sheet system \rf{SSM} has the form~\cite{VLW01}:
\begin{subequations}
\label{SSM2D}
\begin{align}
\mathbf{v}_t+(\mathbf{v}\cdot\nabla)\mathbf{v}&=\sigma\nabla\Delta h+\frac{\mu}{h}\nabla\cdot(h[\nabla\mathbf{v}+(\nabla\mathbf{v})^T+2(\nabla\cdot\mathbf{v})\mathbf{I}]),
\label{SSM2D1}\\
h_t&= -\mathrm{div}\left(h\mathbf{v}\right),
\label{SSM2D2}
\end{align}
\end{subequations}
where $\mathbf{v}\in\mR^2$ is the velocity in the plane $(x,y)\in B_1(0)$, where $B_1(0)$ is the unit disc and $\sigma,\,\mu$ denote again surface tension and viscosity. 

Here we are interested in the radial symmetric solutions to \rf{SSM2D} having the form:
\bes
h(x,y,t)=h(r,t),\quad \mathbf{v}(x,y,t)=v(r,t)\mathbf{e}_\mathbf{r}\quad\text{with}\ r=\sqrt{x^2+y^2}\in (0,\,1).
\ees
Under the last ansatz system \rf{SSM2D} reduces to~\cite{VLW01}:
\begin{subequations}
\label{SSMr}
\begin{align}
v_t+vv_r&=\sigma\Big(\frac{1}{r}[rh_r]_r\Big)_r+\frac{4\mu}{h}\left(\left[\frac{h}{r}(rv)_r\right]_r-\frac{vh_r}{2r}\right),
\label{SSM1r}\\
h_t&= -\frac{\left(rhv\right)_r}{r}.
\label{SSM2r}
\end{align}
\end{subequations}
We consider \rf{SSMr} in the domain $(0,\,T)\times(0,1)$ with
boundary conditions
\be
h_r(0,t)=h_r(1,t)=v(0,t)=v(1,t)=0
\lb{rBC}
\ee
which together with \rf{SSM2r} imply the mass conservation:
\be
\int_0^1h(r,t)r\,dr=\int_0^1h_0(r)r\,dr=M.
\lb{CMr}
\ee
We first establish the energy and entropy estimates for system \rf{SSMr}.
\begin{lemma}\label{A:Th1}
For classical positive solutions to problem \rf{SSMr}-\rf{rBC} the following energy and entropy inequalities hold:
\sbea
\tfrac{d}{dt}\int_0^1\left[\frac{hv^2}{2}+\sigma\frac{|h_r|^2}{2}\right]r\,dr&\leqslant&-2\mu\left[\int_0^1\frac{h|(vr)_r|^2}{r}\,dr\right],\nonumber\\
\lb{EEr}\\
\tfrac{d}{dt}\int_0^1\left[\frac{h}{2}\left(v+4\mu\frac{h_r}{h}\right)^2+\sigma\frac{|h_r|^2}{2}\right]r\,dr&\leqslant&-2\mu\sigma\int_0^1\frac{|(rh_r)_r|^2}{r}\,dr
+\frac{C}{\mu\sigma m^3}\int_0^1\frac{hv^2}{r}\,dr,\nonumber\\
\lb{EnEr}
\seea
provided
\be
h(r,t)\geqslant m>0\quad\text{for all}\ t\in[0,T).
\lb{h_mbnd}
\ee
\end{lemma}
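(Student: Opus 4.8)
The overall approach is to prove both inequalities by testing the two equations of \rf{SSMr} against suitable weights and integrating by parts; all boundary terms are annihilated by \rf{rBC} (note in particular that $v$, $h_r$, and hence $w:=v+4\mu h_r/h$, all vanish at $r=0$ and $r=1$), and positivity of $h$ enters only as a sign condition for \rf{EEr} but quantitatively, through $h\geqslant m$, for \rf{EnEr}. For \rf{EEr} I would multiply \rf{SSM1r} by $rhv$, multiply \rf{SSM2r} by $\tfrac{1}{2}rv^2$, add these to $\tfrac{d}{dt}\bigl(\tfrac{\sigma}{2}\int_0^1 h_r^2\,r\,dr\bigr)$ and integrate over $(0,1)$. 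The inertial terms combine into $\tfrac{d}{dt}\bigl(\tfrac{1}{2}\int_0^1 hv^2\,r\,dr\bigr)$ with the convective parts cancelling, and one integration by parts makes the curvature term cancel against the one coming from $\sigma\int_0^1 h_r h_{rt}\,r\,dr$. The leftover viscous term $4\mu\int_0^1\bigl(rv[\tfrac{h}{r}(rv)_r]_r-\tfrac{1}{2}v^2h_r\bigr)\,dr$ equals, after integrating the first piece by parts and the second by parts ($v$ vanishing at $0$ and $1$), the quantity $-4\mu\int_0^1\tfrac{h}{r}[(rv)_r]^2\,dr+4\mu\int_0^1 hvv_r\,dr$; expanding $\tfrac{1}{r}[(rv)_r]^2=\tfrac{v^2}{r}+2vv_r+rv_r^2$ turns this into $-2\mu\int_0^1\tfrac{h}{r}[(rv)_r]^2\,dr-2\mu\int_0^1\bigl(\tfrac{hv^2}{r}+rhv_r^2\bigr)\,dr$, and discarding the last, nonpositive, integral gives \rf{EEr}; the factor of two is lost exactly here.

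For \rf{EnEr} write $S$ for the functional $\tfrac{1}{2}\int_0^1\bigl[h(v+4\mu h_r/h)^2+\sigma h_r^2\bigr]\,r\,dr$ on its left-hand side. The crucial observation is that the entropy velocity $w:=v+4\mu h_r/h=v+G_r$, $G:=4\mu\ln h$, satisfies a transport equation with a \emph{single} geometric source: computing $G_t=-4\mu(rhv)_r/(rh)$ from \rf{SSM2r}, substituting into $w_t=v_t+(G_t)_r$ together with \rf{SSM1r}, and using $v_r+v/r=(rv)_r/r$ throughout, all awkward terms cancel and one finds
\begin{equation*}
w_t+vw_r=\sigma\Bigl(\tfrac{1}{r}(rh_r)_r\Bigr)_r+\frac{2\mu v h_r}{rh}.
\end{equation*}
Testing this against $rhw$, using \rf{SSM2r} once more in the $h_t$-term and adding $\tfrac{d}{dt}\bigl(\tfrac{\sigma}{2}\int_0^1 h_r^2\,r\,dr\bigr)$, the inertial and (after one integration by parts, as in the one-dimensional identity \rf{EnEs_o}) curvature contributions cancel, leaving
\begin{equation*}
\tfrac{d}{dt}S=-4\mu\sigma\int_0^1\frac{|(rh_r)_r|^2}{r}\,dr+2\mu\int_0^1 w\,v\,h_r\,dr.
\end{equation*}
It remains to absorb the last term: writing $w v h_r=v^2 h_r+4\mu v h_r^2/h$, using the elementary bound $\|h_r\|_{L^\infty(0,1)}\leqslant C\bigl(\int_0^1 r^{-1}|(rh_r)_r|^2\,dr\bigr)^{1/2}$ (from $rh_r(r)=\int_0^r (sh_r)_s\,ds$ and Cauchy--Schwarz), the a priori bound on $\int_0^1 rh_r^2\,dr$ furnished by \rf{EEr}, and $h\geqslant m$, one bounds both pieces by Cauchy--Schwarz against $\int_0^1 r^{-1}hv^2\,dr$; the power $m^{-3}$ in \rf{EnEr} originates from the worst piece $4\mu\int_0^1 v h_r^2/h\,dr$, which after Cauchy--Schwarz leaves $\int_0^1 r\,h_r^4/h^3\,dr$. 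A final application of Young's inequality, sending the emerging multiples of $\int_0^1 r^{-1}|(rh_r)_r|^2\,dr$ into the $-4\mu\sigma$ reservoir (keeping at least $-2\mu\sigma$), yields \rf{EnEr}.

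The main obstacle is this last absorption step: organising the bookkeeping so that every curvature and zeroth-order remainder is dominated by $\eps\int_0^1 r^{-1}|(rh_r)_r|^2\,dr$ plus a term of the stated form $\tfrac{C}{\mu\sigma m^3}\int_0^1 r^{-1}hv^2\,dr$, with the dependence on $m$, $\mu$ and $\sigma$ tracked correctly. The derivation of the clean $w$-equation, though somewhat lengthy, is routine, and no regularity beyond ``classical positive solution'' is required, every manipulation being an exact identity followed by Young's inequality.
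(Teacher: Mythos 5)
Your proof of \rf{EEr} is essentially the paper's own computation: multiplying \rf{SSM1r} by $rhv$ and using \rf{SSM2r} gives exactly the identity \rf{Ms1_a}, and your rewriting of its right-hand side as $-2\mu\int_0^1 r^{-1}h|(rv)_r|^2\,dr-2\mu\int_0^1\bigl(r^{-1}hv^2+rhv_r^2\bigr)\,dr$ is just another way of discarding the same nonpositive remainder. For \rf{EnEr} your organisation is genuinely different: the paper multiplies \rf{SSM1r} by $rh_r$ and then adds a multiple of the energy identity, whereas you derive a transport equation for the entropy velocity $w=v+4\mu h_r/h$ (the radial analogue of the structure behind \rf{EnEs_o}) and test it with $rhw$. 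I checked your $w$-equation and the resulting exact identity $\tfrac{d}{dt}S=-4\mu\sigma\int_0^1 r^{-1}|(rh_r)_r|^2\,dr+2\mu\int_0^1 wvh_r\,dr$; both are correct, and your bookkeeping is in fact more faithful than the paper's display \rf{En_LHS}, which loses the contribution $\int_0^1 hvv_r\,dr$ (equivalently your piece $2\mu\int_0^1 v^2h_r\,dr$) when converting $-\int_0^1 h v_r^2 r\,dr-\int_0^1 r^{-1}hv^2\,dr$ into a time derivative via \rf{Ms1_a}; estimating that extra piece, as you do, is the correct completion, and the second piece $8\mu^2\int_0^1 vh_r^2/h\,dr$ is then handled exactly as in \rf{En_RHS_visc1}. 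Two small points. First, to bound $\int_0^1 v^2h_r\,dr$ \emph{linearly} in $\int_0^1 r^{-1}hv^2\,dr$ you need the kinetic part of the energy bound, $\int_0^1 rhv^2\,dr\leqslant C$, not the bound on $\int_0^1 rh_r^2\,dr$ that you cite (write $v^2h_r=\bigl(v\sqrt h/\sqrt r\bigr)\cdot\bigl(vh_r\sqrt r/\sqrt h\bigr)$, apply Cauchy--Schwarz, $h\geqslant m$ and $\|h_r\|_\infty^2\leqslant C\int_0^1 r^{-1}|(rh_r)_r|^2\,dr$); since \rf{EEr} provides both bounds this is an immediate repair, but without it Young's inequality would leave the square of $\int_0^1 r^{-1}hv^2\,dr$. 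Second, your constants come out as $C\mu/(\sigma m^2)$ and $C\mu^3/(\sigma m^3)$ rather than the stated $C/(\mu\sigma m^3)$; with the generic, data-dependent $C$ this is immaterial, and the paper's own tracking through \rf{En_RHS_visc1} is no sharper.
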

\begin{proof}[Proof of Lemma~\ref{A:Th1}] Let us multiply equation \rf{SSM1r} by $hur$ and integrate it in space. After applying integration by parts several times and using equation \rf{SSM2r}
one obtains the equality
\be
\tfrac{d}{dt}\int_0^1\left[\frac{hv^2}{2}+\sigma\frac{|h_r|^2}{2}\right]r\,dr=-4\mu\left[\int_0^1h|v_r|^2r\,dr+\int_0^1\frac{hv^2}{r}\,dr+\int_0^1hvv_r\,dr\right].
\lb{Ms1_a}
\ee
Note that the last integral in \rf{Ms1_a} can be bounded by
\bes
-\int_0^1hvv_r\,dr\leqslant\tfrac{1}{2}\left[\int_0^1h|v_r|^2r\,dr+\int_0^1\frac{hv^2}{r}\,dr\right].
\ees
This together with \rf{Ms1_a} imply \rf{EEr}.

To show \rf{EnEr} we multiply equation \rf{SSM1r} by $rh_r$ and integrate it in space. The left-hand side of \rf{SSM1r} produces then:
\bea
&&\int_0^1(v_t+v_r)h_rr\,dr=\tfrac{d}{dt}\int_0^1vh_r r\,dr-\int_0^1vh_{rt}r\,dr+\int_0^1vv_xh_rr\,dr\nonumber\\
&&=\tfrac{d}{dt}\int_0^1vh_r r\,dr-\int_0^1(vr)_r\frac{(hvr)_r}{r}\,dr+\int_0^1vv_xh_rr\,dr\nonumber\\
&&=\tfrac{d}{dt}\int_0^1vh_r r\,dr-\int_0^1h|v_r|^2r\,dr-\int_0^1\frac{hv^2}{r}\,dr\nonumber\\\nonumber\\
&&=\tfrac{d}{dt}\int_0^1\left(vh_r+\frac{hv^2}{8\mu}+\frac{\sigma}{8\mu}|h_r|^2\right)r\,dr,
\lb{En_LHS}
\eea
where in the second equality above we used equation \rf{SSM2r}.
Next, for the curvature term at the right-hand side of  \rf{SSM1r} one obtains
\be
\int_0^1\Big(\frac{1}{r}[rh_r]_r\Big)_rh_rr\,dr=-\int_0^1\frac{1}{r}|[rh_r]_r|^2\,dr.
\lb{En_RHS_curv}
\ee
In turn, for the viscous term  at the right-hand side of  \rf{SSM1r} one obtains
\bea
&&\int_0^1\frac{rh_r}{h}\left(\left[\frac{h}{r}(rv)_r\right]_r-\frac{vh_r}{2r}\right)\,dr=-\int_0^1\left(\frac{rh_r}{h}\right)_r\frac{h}{r}(rv)_r\,dr-\tfrac{1}{2}\int_0^1\frac{v|h_r|^2}{h}\,dr\nonumber\\
&&=-\int_0^1\left(\frac{rh_r}{h}\right)_r\frac{(rhv)_r}{r}\,dr+\int_0^1\left(\frac{rh_r}{h}\right)_rh_rv\,dr-\tfrac{1}{2}\int_0^1\frac{v|h_r|^2}{h}\,dr\nonumber\\
&&=\int_0^1\frac{rh_r}{h}\left(\frac{(rhv)_r}{r}\right)_r\,dr+\int_0^1r\left(\frac{h_r}{h}\right)_rh_rv\,dr+\tfrac{1}{2}\int_0^1\frac{v|h_r|^2}{h}\,dr\nonumber\\
&&=-\int_0^1\frac{h_{tr}h_r}{h}r\,dr-\tfrac{1}{2}\int_0^1\frac{|h_r|^2}{h^2}(rhv)_r\,dr+\tfrac{1}{2}\int_0^1\frac{v|h_r|^2}{h}\,dr\nonumber\\
&&=-\int_0^1\frac{h_{tr}h_r}{h}r\,dr+\tfrac{1}{2}\int_0^1\frac{|h_r|^2}{h^2}h_t r\,dr+\tfrac{1}{2}\int_0^1\frac{v|h_r|^2}{h}\,dr\nonumber\\
&&=-\tfrac{1}{2}\tfrac{d}{dt}\int_0^1\frac{|h_r|^2}{h}r\,dr+\tfrac{1}{2}\int_0^1\frac{v|h_r|^2}{h}\,dr,
\lb{En_RHS_visc}
\eea
where in the fourth and fifths equalities above we have used \rf{SSM2r}
again. Next, we estimate the second integral at the right-hand side of
\rf{En_RHS_visc} using \rf{EEr} and \rf{h_mbnd} as
\bea
\int_0^1\frac{v|h_r|^2}{h}\,dr&\leqslant&\frac{1}{\sqrt{m^3}}\left(\int_0^1\frac{hv^2}{r}\,dr\right)^{1/2}\left(\int_0^1|h_r|^2r\,dr\right)^{1/2}||h_r||_\infty,
\nonumber\\
&\leqslant&\frac{C}{\sqrt{m^3}}\left(\int_0^1\frac{hv^2}{r}\,dr\right)^{1/2}\left(\int_0^1\frac{|(rh_r)_r|^2}{r}\,dr\right)^{1/2}\nonumber\\
&\leqslant&\frac{C}{8\mu\sigma m^3}\int_0^1\frac{hv^2}{r}\,dr+2\mu\sigma\int_0^1\frac{|(rh_r)_r|^2}{r}\,dr,
\lb{En_RHS_visc1}
\eea
where in the second inequality above we have used the estimate
\bes
||h_r||_\infty\leqslant\left(\tfrac{1}{2}\int_0^1\frac{|(rh_r)_r|^2}{r}\,dr\right)^{1/2}.
\ees
Finally, by combining estimates \rf{En_LHS}--\rf{En_RHS_visc1} one arrives at the entropy inequality \rf{EnEr}.
\end{proof}
Similar to the one-dimensional case (Theorem \ref{B:Th1}) the energy and entropy
inequalities \rf{EEr}-\rf{EnEr} allow us to show the global exponential
asymptotic decay, but now only for {\it positive} classical solutions to
\rf{SSMr} having globally lower bound \rf{h_mbnd}. We denote radial energy and entropy functionals as
\beas
S(u,\,h)&:=&\tfrac{1}{2}\int_0^1 \left\{h\left(v+4\mu\frac{h_r}{h}\right)^2+\sigma|h_r|^2\right\}r\,dr,\\
E(u,\,h)&:=&\tfrac{1}{2}\int_0^1\left[hv^2+\sigma|h_r|^2\right]r\,dr.
\eeas
\begin{theorem}[asymptotic exponential decay]\label{A:Th2}
Assume that initial data $(h_0,v_0) \in H^1(0,1) \times L^2(0,1)$,
\bes
\sigma>0\ \text{and}\ \mu>0
\ees
and 
\be
h(r,t)\geqslant m>0\quad\text{for all}\ t>0.
\lb{h_m_bound}
\ee
Then there exist positive constants $A_i,\,B_i,\,i=1,2$ depending only on $E(h_0,\,v_0),\,S(h_0,\,v_0)$ and parameters $\sigma,\,\mu$ and $m$ such that
\begin{equation}\label{A:2}
||h-M||_{H^1(B_1(0))} \leqslant A_1e^{- B_1\,t},\quad\text{and}\quad ||v||_{L^2(B_1(0))} \leqslant A_2e^{- B_2\,t}\quad\text{for all}\ t \geqslant 0.
\end{equation}
\end{theorem}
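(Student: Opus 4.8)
The plan is to follow the strategy of Theorem~\ref{B:Th1}, the new difficulty being that the radial entropy inequality \rf{EnEr} is no longer purely dissipative: it carries the indefinite term $\frac{C}{\mu\sigma m^3}\int_0^1\frac{hv^2}{r}\,dr$. To cope with this I would work with a single Lyapunov functional $F(t):=S(v,h)+\kappa\,E(v,h)$, where $\kappa=\kappa(\mu,\sigma,m)>0$ is chosen large enough that the bad term is absorbed by the energy dissipation, and then prove a differential inequality $F'\le -B\,F$. \emph{Step 1 (refined energy estimate).} Starting from the identity \rf{Ms1_a} of the proof of Lemma~\ref{A:Th1} and bounding the cross term by Cauchy--Schwarz, $|\int_0^1 hvv_r\,dr|\le\frac12\int_0^1 hv_r^2 r\,dr+\frac12\int_0^1\frac{hv^2}{r}\,dr$, I obtain the sharper form $\frac{d}{dt}E(v,h)\le -2\mu\int_0^1\frac{hv^2}{r}\,dr$. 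This precise form matters because the bad term in \rf{EnEr} involves exactly $\int_0^1\frac{hv^2}{r}\,dr$, and one cannot replace the use of \rf{Ms1_a} by the divergence-type dissipation $\int_0^1\frac{h|(vr)_r|^2}{r}\,dr$ of \rf{EEr}, since the weighted Hardy inequality $\int_0^1\frac{v^2}{r}\,dr\le C\int_0^1\frac{|(rv)_r|^2}{r}\,dr$ fails (logarithmic divergence at $r=0$).

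\emph{Step 2 (Lyapunov functional and differential inequality).} Choosing $\kappa:=\frac{C}{\mu^2\sigma m^3}$, so that $2\kappa\mu$ strictly dominates the coefficient $\frac{C}{\mu\sigma m^3}$, and adding $\kappa$ times the inequality of Step~1 to \rf{EnEr}, I get $F'(t)\le -2\mu\sigma\int_0^1\frac{|(rh_r)_r|^2}{r}\,dr-\frac{C}{\mu\sigma m^3}\int_0^1\frac{hv^2}{r}\,dr$. Now I would invoke: (i) the weighted Poincar\'e inequality $\int_0^1 r\psi^2\,dr\le\frac14\int_0^1\frac{|(r\psi)_r|^2}{r}\,dr$, valid for any bounded $\psi$ (proved by writing $r\psi(r)=\int_0^r s\,\frac{(s\psi)_s}{s}\,ds$ and applying Cauchy--Schwarz), applied with $\psi=h_r$, which is admissible since $h_r(0,t)=h_r(1,t)=0$ for the classical solution; (ii) the elementary bound $1/r\ge r$ on $(0,1)$; and (iii) $h\ge m$. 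Together these give $F'(t)\le -8\mu\sigma\int_0^1 h_r^2 r\,dr-\frac{C}{\mu\sigma m^3}\int_0^1 hv^2 r\,dr\le -B_0\,E(v,h)$ with $B_0=B_0(\mu,\sigma,m)>0$. On the other hand, expanding $h\big(v+4\mu\frac{h_r}{h}\big)^2\le 2hv^2+\frac{32\mu^2}{m}h_r^2$ shows $S(v,h)\le C'(\mu,\sigma,m)\,E(v,h)$, hence $F=S+\kappa E\le(C'+\kappa)E$. Combining, $F'(t)\le -B\,F(t)$ with $B:=B_0/(C'+\kappa)>0$, and Gronwall's inequality yields $F(t)\le F(0)\,e^{-Bt}$, where $F(0)=S(h_0,v_0)+\kappa E(h_0,v_0)<\infty$ because $(h_0,v_0)\in H^1\times L^2$.

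\emph{Step 3 (conclusion).} Since $\frac\sigma2\int_0^1 h_r^2 r\,dr\le S(v,h)\le F(t)$, I get $\int_0^1 h_r^2 r\,dr\le \frac{2}{\sigma}F(0)e^{-Bt}$; the Neumann Poincar\'e inequality together with mass conservation \rf{CMr} (which fixes the weighted mean of $h$) then upgrades this to $\|h-M\|_{H^1(B_1(0))}^2\le A_1^2 e^{-B_1 t}$ with $B_1=B$. Likewise $\frac\kappa2\int_0^1 hv^2 r\,dr\le\kappa E(v,h)\le F(t)$, and then $h\ge m$ gives $\int_0^1 v^2 r\,dr\le \frac{2}{\kappa m}F(0)e^{-Bt}$, i.e. $\|v\|_{L^2(B_1(0))}^2\le A_2^2 e^{-B_2 t}$ with $B_2=B$. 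Here $\kappa,C',B_0$ depend only on $\mu,\sigma,m$, so all $A_i,B_i$ depend only on $E(h_0,v_0),S(h_0,v_0),\sigma,\mu$ and $m$, giving \rf{A:2}.

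The main obstacle is precisely the indefinite term $\frac{C}{\mu\sigma m^3}\int_0^1\frac{hv^2}{r}\,dr$ in the radial entropy inequality: it forces one to (a) replace the divergence-type energy dissipation of \rf{EEr} by the stronger estimate of Step~1 — the degeneracy at $r=0$ makes the naive Hardy bound unavailable — and (b) track the constant $\kappa$ carefully so that this term is absorbed with enough room left to produce the decay of $v$ as well. Apart from this point, the argument is, as in Theorem~\ref{B:Th1}, routine bookkeeping with weighted Poincar\'e inequalities on $(0,1)$ and the uniform positivity \rf{h_m_bound}.
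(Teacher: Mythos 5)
Your proof is correct, but it takes a genuinely different route from the paper's. The paper integrates the entropy inequality \rf{EnEr} in time, bounds the accumulated indefinite term $\tfrac{C}{\mu\sigma m^3}\iint\tfrac{hv^2}{r}\,drdt$ by a constant using the energy dissipation (essentially the consequence of \rf{Ms1_a} that you isolate in Step~1), and then extracts exponential decay of $\int_0^1 h_r^2\,r\,dr$ from the resulting integral inequality by the same ODE-comparison device as in Theorem~\ref{B:Th1}, treating the velocity afterwards ``by a similar argument''. You instead build the single Lyapunov functional $F=S+\kappa E$ and close the differential inequality $F'\le -BF$; this gives the decay of $\|h_r\|$ and $\|\sqrt{h}\,v\|$ simultaneously, with explicit constants depending only on $E_0,S_0,\sigma,\mu,m$, and it bypasses the somewhat delicate step of converting an inequality of the form $\phi(t)+B\int_0^t\phi\le A$ into pointwise exponential decay. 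The price is that you must return to the identity \rf{Ms1_a} to get the sharper dissipation $\tfrac{d}{dt}E\le-2\mu\int_0^1\tfrac{hv^2}{r}\,dr$, which is legitimate since that identity is established in the proof of Lemma~\ref{A:Th1}; your weighted Poincar\'e inequality with constant $\tfrac14$, the bound $S\le C'E$, and the use of $1/r\ge r$ and $h\ge m$ are all correct.

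One side remark is wrong, though it does not affect your argument: the Hardy-type inequality $\int_0^1\tfrac{v^2}{r}\,dr\le\int_0^1\tfrac{|(rv)_r|^2}{r}\,dr$ does hold under the boundary conditions $v(0,t)=v(1,t)=0$, because expanding the square gives $\int_0^1\tfrac{|(rv)_r|^2}{r}\,dr=\int_0^1 v_r^2\,r\,dr+\bigl[v^2\bigr]_0^1+\int_0^1\tfrac{v^2}{r}\,dr$ and the boundary term vanishes; there is no logarithmic obstruction. The genuine reason your Step~1 is the cleaner move is different: the weighted dissipation $\int_0^1\tfrac{h|(vr)_r|^2}{r}\,dr$ appearing in \rf{EEr} does not obviously dominate $\int_0^1\tfrac{hv^2}{r}\,dr$, since there the cross term becomes $-\int_0^1 h_r v^2\,dr$, which has no sign. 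Finally, note (an issue inherited from the paper's statement, not introduced by you) that \rf{CMr} normalizes $\int_0^1 h\,r\,dr=M$, so the mean of $h$ over $B_1(0)$ is $2M$; the limiting constant in \rf{A:2} should be interpreted with this normalization in mind.
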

\begin{proof}[Proof of Theorem~\ref{A:Th2}]
Using the Poincar\'{e}  inequality
\bes
\int_{B_1(0)} |\nabla h|^2 \, dxdy\leqslant
\tfrac{1}{4}\int_{B_1(0)}|D^2h|^2\,dxdy,\quad \nabla h\cdot n|_{\partial B_1(0)}=0,
\ees
which reduces in the case of the radial-symmetric function $h(x,y,t)=h(r,t)$ to
\bes
\int_0^1 |h_r|^2r\, dr\leqslant \tfrac{1}{4}\int_0^1\frac{|(rh_r)_r|^2}{r}\,dr,
\ees
from the energy and entropy inequalities  \rf{EEr}--\rf{EnEr} we find that
\begin{equation}\label{AC:3}
\tfrac{\sigma}{2} \int_0^1 |h_r|^2r\,dr +\tfrac{\mu\sigma}{8}\int_0^T\int_0^1|h_r|^2r\,drdt\leqslant S(v_0,h_0)+\tfrac{C}{\mu\sigma m^3}.
\end{equation}
From (\ref{AC:3}),  $\|\nabla h\|_2^2$ is dominated by the solution of
$$
y'(t) = - B_1\, y(t), \ \ y(0) = A_1,
$$
where
$$
A_1=\tfrac{2}{\sigma}S(v_0,h_0)+\tfrac{2C}{\mu\sigma^2m^3},\
B_1=\tfrac{\mu}{4}.
$$
Solving for $y(t)$, we deduce that
$$
||\nabla h||_2^2 \leqslant y(t) = A_1\,e^{- B_1\,t} \mbox{ for all } t>0,
$$
which implies the first estimate in \rf{A:2}. The second estimate in \rf{A:2} follows by a similar argument using \rf{EEr} and \rf{h_m_bound}.
\end{proof}

%%%%%%%%%%%%%%%%%%%%%%%%%%%%%%%%%%%%%%%%%%%%%%%%%%%%%%%%%%%%%%%%%%%%%%%%%%%
\bibliographystyle{unsrtnat}
\bibliography{bibliography}
%%%%%%%%%%%%%%%%%%%%%%%%%%%%%%%%%%%%%%%%%%%%%%%%%%%%%%%%%%%%%%%%%%%%%%%%%%%
\clearpage
\addtocounter{tocdepth}{2}

\end{document}